\documentclass[12pt]{article}
\usepackage{amsfonts}
\usepackage{latexsym}
\usepackage{cite}
\usepackage{amsmath,amsfonts,latexsym,amssymb}
\usepackage[mathscr]{eucal}
\usepackage{cases}
\usepackage{amsthm}

\usepackage[bf,small]{caption2}
\usepackage{float}
\usepackage{graphicx}
\usepackage{amsmath}
\usepackage{amssymb}
\usepackage[all]{xy}

\newtheorem{theorem}{theorem}[section]
\newtheorem{thm}[theorem]{Theorem}
\newtheorem{lem}[theorem]{Lemma}

\newtheorem{prop}[theorem]{Proposition}
\newtheorem{cor}[theorem]{Corollary}

\newtheorem{defn}[theorem]{Definition}

\newtheorem{rmk}[theorem]{Remark}

\begin{document}

\title{\textbf{Enumerating typical abelian coverings of Cayley graphs}}
\author{\Large Haimiao Chen
\footnote{Email: \emph{chenhm@math.pku.edu.cn}}\\
\normalsize \em{School of Mathematical Science, Peking University, Beijing, China}}
\date{}
\maketitle

\begin{abstract}
In this article we complete the work of enumerating typical abelian coverings of Cayley graphs, by reducing the problem to enumerating certain subgroups of finite abelian groups.
\end{abstract}
\emph{key words:} Cayley graph, typical abelian covering, enumeration, subgroup of abelian group.   \\
\emph{MSC2010:} 05C10, 05C25.

\section{Introduction}

We consider the problem of enumerating isomorphism classes of typical abelian coverings of Cayley graphs. This problem has recently been
considered as one of the central research topics in enumerative topological graph theory and has been partially solved in consecutive papers \cite{typical3,
typical2,typical}.

First, recall some basic notions in graph theory. One can refer to \cite{Graph}.

The graphs considered in this paper are finite, simple and connected. For a graph $G$, use $V(G)$ and $E(G)$ to denote the set of vertices and edges,
respectively. The neighborhood of a vertex $v\in V(G)$, denoted by $N(v)$, is the set of vertices adjacent to $v$.

A \emph{covering }of graphs $p:\tilde{G}\rightarrow G$ is a surjection
$p:V(\tilde{G})\rightarrow V(G)$
such that
$p|_{N(\tilde{v})}:N(\tilde{v})\rightarrow N(v)$
is bijective for all $v\in V(G)$ and $\tilde{v}\in p^{-1}(v)$. We say that $p$ is \emph{regular} if $\textrm{Aut}(\tilde{G})$ acts transitively
on each fiber $p^{-1}(v)$. Two coverings $p_{i}:\tilde{G}_{i}\rightarrow G,i=1,2,$ are said to be \emph{isomorphic} if there
exists a graph isomorphism $\Phi:\tilde{G}_{1}\rightarrow\tilde{G}_{2}$ such that $p_{2}\circ\Phi=p_{1}$.

Let $A$ be a finite group and let $X$ be a subset of $A$ such that $X=X^{-1}$ and $1\notin X$. The \emph{Cayley graph
on $A$ relative to $X$}, denoted as $G=\textrm{Cay}(A,X)$, is the graph having vertex set $V(G)=A$ and edge set
$E(G)=\{\{g,gx\}\colon g\in A,x\in X\}$.
It is clear that $G$ is connected if and only if $X$ generates $A$. A \emph{circulant} graph is a Cayley graph on a cyclic group.

A \emph{typical} covering is a covering
$f_{\ast}:\textrm{Cay}(A,X)\rightarrow\textrm{Cay}(B,Y)$
induced by a surjective group homomorphism $f:A\rightarrow B$ such that $f(X)=Y$; it is regular with covering transformation group
$\ker f$. We assume $X\cap\ker f=\emptyset$ and that $f|_{X}:X\rightarrow Y$ is bijective in order to deal with simple graphs only.
The typical covering is called \emph{circulant} (\emph{abelian}) if $A$ is a cyclic (abelian) group.

Typical circulant coverings of a circulant graph were enumerated in \cite{typical3, typical2}, and typical abelian coverings
with a prime number of folds of a circulant graph were enumerated in \cite{typical}.

In this paper we get some more general results. We count typical abelian coverings of a Cayley graph on any finite abelian
group with any given abelian covering transformation group in Theorem \ref{thm:typical1}. Furthermore, we count those with any given number of folds
in Theorem \ref{thm:typical2}. Both results are given by explicit formulas. Moreover, quite general but not very concrete results are
given in Theorem \ref{thm:classfication} and Theorem \ref{thm:typical}.
These are settled by clarifying the connection between typical coverings and subgroups of some abelian groups, and then reducing the
problem to enumerating certain subgroups of abelian groups. The topics on enumerations of abelian groups are important and related
to ours, so we devote several pages to discussing them in Section 3.

Here are some conventions for notation. For a ring $\mathcal{R}$, we use $\mathcal{R}^{n,m}$ to denote the set of $n\times m$ matrices
with entries in $\mathcal{R}$ and identify $\mathcal{R}^{1,m}$ with $\mathcal{R}^{m}$; let $\textrm{GL}(m,\mathcal{R})$ denote the
set of $m\times m$ invertible matrices with entries in $\mathcal{R}$. For $M\in\mathcal{R}^{n,m}$, by $\langle M\rangle$ we mean the
subgroup of $\mathcal{R}^{m}$ generated by the row vectors of $M$. For a finite abelian group $A$, let $\exp(A)$ denote the exponent
of $A$, that is, the least positive integer $s$ such that $sa=0$ for each $a\in A$. The cyclic group $\mathbb{Z}/n\mathbb{Z}$ is
abbreviated by $\mathbb{Z}_{n}$. For a real number $x$, the least integer not less than $x$ is denoted by $\lfloor x\rfloor$.

\section{Classifying typical abelian coverings of Cayley graphs}

In this section, a connection between typical abelian coverings of a Cayley graph on an abelian group and certain subgroups
of some abelian group determined by the Cayley graph is established.

The following Lemma was proved in \cite{typical}:
\begin{lem} \label{lem:typical}
Two connected typical coverings $f_{i}:\emph{Cay}(A_{i},X_{i})\rightarrow\emph{Cay}(B,Y),\\i=1,2$, are isomorphic if and only
if there exists a group isomorphism $\phi:A_{1}\rightarrow A_{2}$ such that $f_{2}\circ\phi=f_{1}$ and $\phi(X_{1})=X_{2}$.
\end{lem}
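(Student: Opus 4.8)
The plan is to prove both implications, the backward one being essentially immediate and the forward one requiring a normalization trick together with the edge-coloring carried by the covering. For the backward direction, suppose a group isomorphism $\phi\colon A_1\to A_2$ satisfies $f_2\circ\phi=f_1$ and $\phi(X_1)=X_2$. I would check that $\phi$, viewed as a map of vertex sets, is already a graph isomorphism $\mathrm{Cay}(A_1,X_1)\to\mathrm{Cay}(A_2,X_2)$: an edge $\{g,gx\}$ with $x\in X_1$ is sent to $\{\phi(g),\phi(g)\phi(x)\}$, which is an edge because $\phi(x)\in X_2$, and $\phi$ is a bijection. Taking $\Phi=\phi$ in the definition of isomorphic coverings, the hypothesis $f_2\circ\phi=f_1$ says precisely that $f_2\circ\Phi=f_1$, so the two coverings are isomorphic.

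For the forward direction, suppose $\Phi\colon\mathrm{Cay}(A_1,X_1)\to\mathrm{Cay}(A_2,X_2)$ is a graph isomorphism with $f_2\circ\Phi=f_1$. Since $\Phi$ need not respect group multiplication, the first step is to normalize it to fix the identity. Put $a_0=\Phi(1_{A_1})$; then $f_2(a_0)=f_2(\Phi(1_{A_1}))=f_1(1_{A_1})=1_B$, so $a_0\in\ker f_2$. I would replace $\Phi$ by $\phi\colon g\mapsto a_0^{-1}\Phi(g)$. Left translation by $a_0^{-1}$ is a graph automorphism of $\mathrm{Cay}(A_2,X_2)$ (it carries $\{h,hx\}$ to $\{a_0^{-1}h,a_0^{-1}hx\}$), so $\phi$ is again a graph isomorphism, and now $\phi(1_{A_1})=1_{A_2}$. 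Moreover $f_2(\phi(g))=f_2(a_0)^{-1}f_2(\Phi(g))=f_1(g)$ because $a_0\in\ker f_2$, so $\phi$ still satisfies $f_2\circ\phi=f_1$. It therefore suffices to show that this normalized $\phi$ is a group isomorphism with $\phi(X_1)=X_2$.

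The key device is the canonical $Y$-coloring of arcs induced by each covering: since $f_i|_{X_i}\colon X_i\to Y$ is a bijection, at every vertex $v$ and every $y\in Y$ there is a unique neighbor $vx$ with $f_i(x)=y$. For $x\in X_1$ the vertex $\phi(x)$ is a neighbor of $\phi(1_{A_1})=1_{A_2}$, hence lies in $X_2$, and $f_2(\phi(x))=f_1(x)$; by injectivity of $f_2|_{X_2}$ it is the unique member of $X_2$ of color $f_1(x)$, and applying the same to $\phi^{-1}$ yields $\phi(X_1)=X_2$. For multiplicativity I would fix $g\in A_1$ and $x\in X_1$: then $\phi(gx)$ is a neighbor of $\phi(g)$, say $\phi(gx)=\phi(g)x''$ with $x''\in X_2$, and $f_2(x'')=f_2\bigl(\phi(g)^{-1}\phi(gx)\bigr)=f_1(g)^{-1}f_1(gx)=f_1(x)$, so $x''=\phi(x)$ and $\phi(gx)=\phi(g)\phi(x)$. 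Since $\mathrm{Cay}(A_1,X_1)$ is connected, $X_1$ generates $A_1$; writing any $h$ as $x_1\cdots x_k$ with $x_i\in X_1$ and iterating gives $\phi(gh)=\phi(g)\phi(h)$ for all $g,h$, so $\phi$ is a multiplicative bijection, hence a group isomorphism with the required properties.

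The step I expect to be the crux is showing that the normalized $\phi$ preserves the $Y$-coloring of arcs, since a priori a graph isomorphism preserves only adjacency, not this finer structure. Here the hypothesis $f_2\circ\phi=f_1$ does all the work, through the identity $f_2\bigl(\phi(g)^{-1}\phi(gx)\bigr)=f_1(x)$: this is the single point where the covering data, rather than mere graph structure, is invoked, and it is what upgrades the combinatorial isomorphism to an algebraic one. The subsequent extension to a homomorphism by induction on word length in $X_1$, and the backward direction, are then routine.
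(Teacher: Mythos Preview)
Your argument is correct. The backward direction is immediate, and your forward direction---normalizing $\Phi$ by a left translation so that it fixes the identity, then using the bijectivity of $f_i|_{X_i}$ to see that the resulting $\phi$ respects the $Y$-coloring of arcs, hence satisfies $\phi(gx)=\phi(g)\phi(x)$ for $x\in X_1$, and finally extending to all of $A_1$ by connectedness---is exactly the standard proof. The only place to be slightly careful is in the step $f_2(x'')=f_2\bigl(\phi(g)^{-1}\phi(gx)\bigr)=f_1(g)^{-1}f_1(gx)$, which uses that $f_2$ is a group homomorphism; this is part of the definition of a typical covering, so it is fine.

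As for the comparison: the paper does not give its own proof of this lemma at all. It simply quotes the statement and attributes it to \cite{typical} (Feng, Kwak, Kwon), so there is nothing to compare approach-wise. Your write-up supplies a complete, self-contained proof where the paper merely cites one; the argument you give is essentially the one in the cited reference.
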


From now on we assume that all groups are abelian, and the group operation is viewed as addition.

Let $\textrm{Cay}(B,Y)$ be a Cayley graph such that $Y=-Y$, $\langle Y\rangle=B$. Suppose further that $Y=\{\pm y_{1},\cdots,\pm y_{l}\}\cup\{y'_{1},\cdots,y'_{l'}\}$, where $2y_{i}\neq 0$ for all $y_{i}, 1\leqslant i\leqslant l$, and
$2y'_{j}=0$ for all $y'_{j}, 1\leqslant j\leqslant l'$. Then the valence of $\textrm{Cay}(B,Y)$ is $2l+l'$. Let
\begin{align}
R(Y)=\{(a_{1},\cdots,a_{l+l'})\in\mathbb{Z}^{l+l'}\colon\sum\limits_{i=1}^{l}a_{i}y_{i}+\sum\limits_{j=1}^{l'}a_{l+j}y'_{j}=0\}.
\end{align}
Then $R(Y)$ contains the subgroup $R_{0}$, where
\begin{align}
R_{0}=\{(a_{1},\cdots,a_{l+l'})\colon a_{1}=\cdots =a_{l}=0; 2|a_{l+j}, 1\leqslant j\leqslant l' \},
\end{align}
and $B\cong\mathbb{Z}^{l+l'}/R(Y)$.

For $i=1,\cdots,l+l'$, let $E_{i}=(E_{i,1},\cdots,E_{i,l+l'})\in\mathbb{Z}^{l+l'}$ with $E_{i,j}=\delta_{i,j}$. Put
\begin{align}
E=\{\pm E_{1},\cdots,\pm E_{l}\}\cup\{E_{l+1},\cdots,E_{l+l'}\}.
\end{align}
\begin{lem}
Each typical covering $f:\emph{Cay}(A,X)\rightarrow\emph{Cay}(B,Y)$ with covering transformation group $F$ is isomorphic to a
typical covering
$$\emph{Cay}(\mathbb{Z}^{l+l'}/C,q_{C}(E))\rightarrow\emph{Cay}(\mathbb{Z}^{l+l'}/R(Y),q_{R(Y)}(E))\cong\emph{Cay}(B,Y),$$
the covering induced by the canonical epimorphism $\mathbb{Z}^{l+l'}/C\rightarrow\mathbb{Z}^{l+l'}/R(Y)$,
for some $C$ with $R_{0}\leqslant C\leqslant R(Y)$ such that $R(Y)/C\cong F$, where $q_{C}:\mathbb{Z}^{l+l'}\rightarrow\mathbb{Z}^{l+l'}/C$
is the canonical quotient map.
\end{lem}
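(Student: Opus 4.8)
The plan is to present $A$ explicitly as a quotient of $\mathbb{Z}^{l+l'}$ so that the given covering becomes the claimed normal form. Since $f|_{X}\colon X\to Y$ is a bijection that commutes with inversion and $X=-X$, I would first show that $X$ inherits the block structure of $Y$: writing $x_{i},x'_{j}\in X$ for the preimages of $y_{i},y'_{j}$, the relations $2y_{i}\neq 0$ and $2y'_{j}=0$ together with injectivity of $f|_{X}$ force $2x_{i}\neq 0$ and $2x'_{j}=0$, while $f(-x_{i})=-y_{i}$ identifies $-x_{i}$ with the remaining elements of $X$. Thus $X=\{\pm x_{1},\dots,\pm x_{l}\}\cup\{x'_{1},\dots,x'_{l'}\}$.

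Next I define the homomorphism $\psi\colon\mathbb{Z}^{l+l'}\to A$ by $\psi(E_{i})=x_{i}$ for $i\leqslant l$ and $\psi(E_{l+j})=x'_{j}$ for $j\leqslant l'$. Because the covering graph is connected we have $\langle X\rangle=A$, so $\psi$ is surjective; setting $C=\ker\psi$ yields an isomorphism $\bar\psi\colon\mathbb{Z}^{l+l'}/C\xrightarrow{\sim}A$ carrying $q_{C}(E)$ onto $X$. The same recipe applied to $Y$ gives the map $\pi\colon\mathbb{Z}^{l+l'}\to B$, $\pi(a)=\sum_{i}a_{i}y_{i}+\sum_{j}a_{l+j}y'_{j}$, with $\ker\pi=R(Y)$, and by construction $\pi=f\circ\psi$, since both sides agree on the generators $E_{i}$.

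With this commuting triangle in hand the two algebraic claims follow quickly. The inclusion $C\leqslant R(Y)$ is immediate from $\pi=f\circ\psi$: any $a\in C$ satisfies $\pi(a)=f(\psi(a))=0$. The containment $R_{0}\leqslant C$ uses $2x'_{j}=0$, since an element of $R_{0}$ maps under $\psi$ to a sum of even multiples of the $x'_{j}$, hence to $0$. Finally, $F=\ker f$ corresponds under $\bar\psi$ to $\{q_{C}(a)\colon\pi(a)=0\}=q_{C}(R(Y))=R(Y)/C$, giving $R(Y)/C\cong F$.

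It remains to promote the group isomorphism $\phi=\bar\psi^{-1}\colon A\to\mathbb{Z}^{l+l'}/C$ to an isomorphism of coverings. Let $\bar\pi\colon\mathbb{Z}^{l+l'}/R(Y)\xrightarrow{\sim}B$ be induced by $\pi$ and let $\bar q\colon\mathbb{Z}^{l+l'}/C\to\mathbb{Z}^{l+l'}/R(Y)$ be the canonical map (defined since $C\leqslant R(Y)$). I would check $f\circ\bar\psi=\bar\pi\circ\bar q$ by precomposing with the surjection $q_{C}$ and using $\bar q\circ q_{C}=q_{R(Y)}$ and $\bar\pi\circ q_{R(Y)}=\pi=f\circ\psi$. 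Together with $\phi(X)=q_{C}(E)$, Lemma \ref{lem:typical} then identifies $f$ with the covering induced by $\bar q$, which is exactly the asserted normal form. The main obstacle is not any single computation but getting the block decomposition of $X$ and the identification of base graphs via $\bar\pi$ exactly right, so that the hypotheses of Lemma \ref{lem:typical} (namely $f_{2}\circ\phi=f_{1}$ and $\phi(X_{1})=X_{2}$) are verified on the nose rather than merely up to the evident isomorphisms.
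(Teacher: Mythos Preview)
Your proposal is correct and follows the same approach as the paper: define $C$ as the kernel of the evaluation map $\psi\colon\mathbb{Z}^{l+l'}\to A$ sending $E_i\mapsto x_i$, $E_{l+j}\mapsto x'_j$, and read off the isomorphism $A\cong\mathbb{Z}^{l+l'}/C$. The paper's proof is terser---it simply asserts ``Clearly $R_{0}\leqslant C\leqslant R(Y)$'' and states the isomorphism---whereas you supply the details (the block decomposition of $X$ via injectivity of $f|_X$ and $X=-X$, the verification of $R_0\leqslant C$ from $2x'_j=0$, the identification $R(Y)/C\cong F$, and the appeal to Lemma~\ref{lem:typical}), but the underlying argument is identical.
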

\begin{proof}
Recall the assumption in Section 1 that $X\cap\ker f=\emptyset$ and $f|_{X}:X\rightarrow Y$ is bijective. We have
$X=\{\pm x_{1},\cdots,\pm x_{l}\}\cup\{x'_{1},\cdots,x'_{l'}\}$, where $x_{i}=(f|_{X})^{-1}(y_{i}),1\leqslant i\leqslant l$,
and $x'_{j}=(f|_{X})^{-1}(y'_{j}),1\leqslant j\leqslant l'$. Let $$C=\{(a_{1},\cdots,a_{l+l'})\in\mathbb{Z}^{l+l'}\colon\sum\limits_{i=1}^{l}a_{i}x_{i}+\sum\limits_{j=1}^{l'}a_{l+j}x'_{j}=0\}.$$
Clearly $R_{0}\leqslant C\leqslant R(Y)$. The desired isomorphism $A\cong\mathbb{Z}^{l+l'}/C$ is given by sending
$x_{i}$ to $q_{C}(E_{i})$ and sending $x'_{j}$ to $q_{C}(E_{l+j})$.
\end{proof}

Observe that we have a short exact sequence $0\rightarrow F\rightarrow A\rightarrow B\rightarrow 0$ of finite abelian
groups; it follows that $\exp(A)|\exp(B)\exp(F)$. Hence
\begin{align}
R:=\{(a_{1},\cdots,a_{l+l'})\in\mathbb{Z}^{l+l'}\colon\exp(B)\exp(F)|a_{i}, 1\leqslant i\leqslant l+l'\}
\end{align}
is contained in $C$. Let
\begin{align}
\overline{R(Y)}=R(Y)/R,\hspace{4mm} \overline{C}=C/R, \hspace{4mm} \overline{R_{0}}=(R_{0}+R)/R.
\end{align}

From Lemma \ref{lem:typical} we see that two typical coverings $$\textrm{Cay}(\mathbb{Z}^{l+l'}/C_{i},q_{C_{i}}(E))\rightarrow\textrm{Cay}(\mathbb{Z}^{l+l'}/R(Y),q_{R(Y)}(E)), \hspace{4mm} i=1,2,$$
are isomorphic if and only if $C_{1}=C_{2}$ as well as $\overline{C_{1}}=\overline{C_{2}}$.
Thus we have the following classification:
\begin{thm} \label{thm:classfication}
Given abelian groups $A^{0}$ and $F$, the isomorphism classes of typical abelian coverings $\emph{Cay}(A,X)\rightarrow\emph{Cay}(B,Y)$
such that $A\cong A^{0}$ and the covering
transformation group is isomorphic to $F$ are in one-to-one correspondence with the subgroups $D$ of $\overline{R(Y)}$ such that $\overline{R_{0}}\leqslant D$,
$(\mathbb{Z}^{l+l'}/R)/D\cong A^{0}$ and $\overline{R(Y)}/D\cong F$.
\end{thm}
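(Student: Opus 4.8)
The plan is to combine the two Lemmas of this section with the correspondence theorem for the quotient by $R$. By the second Lemma (the reduction sending $f$ to the standard covering $\textrm{Cay}(\mathbb{Z}^{l+l'}/C,q_{C}(E))\to\textrm{Cay}(\mathbb{Z}^{l+l'}/R(Y),q_{R(Y)}(E))$), every typical abelian covering $\textrm{Cay}(A,X)\to\textrm{Cay}(B,Y)$ with $A\cong A^{0}$ and covering transformation group isomorphic to $F$ is isomorphic to a standard one attached to some $C$ with $R_{0}\leqslant C\leqslant R(Y)$; here the condition on the transformation group reads $R(Y)/C\cong F$ and the condition $A\cong A^{0}$ reads $\mathbb{Z}^{l+l'}/C\cong A^{0}$. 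So the first task is to decide when two standard coverings are isomorphic, and the second is to rewrite the resulting set of admissible $C$ as a set of subgroups $D$ of $\overline{R(Y)}$.

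First I would verify, as asserted just before the theorem, that the standard coverings attached to $C_{1}$ and $C_{2}$ are isomorphic if and only if $C_{1}=C_{2}$. By Lemma \ref{lem:typical} such an isomorphism is a group isomorphism $\phi\colon\mathbb{Z}^{l+l'}/C_{1}\to\mathbb{Z}^{l+l'}/C_{2}$ with $f_{2}\circ\phi=f_{1}$ and $\phi(q_{C_{1}}(E))=q_{C_{2}}(E)$. The crux is that the compatibility $f_{2}\circ\phi=f_{1}$ forces $\phi(q_{C_{1}}(E_{i}))=q_{C_{2}}(E_{i})$ for every $i$: this image must lie in $q_{C_{2}}(E)$ and satisfy $f_{2}(\phi(q_{C_{1}}(E_{i})))=q_{R(Y)}(E_{i})$, and since the $2l+l'$ elements $\pm y_{1},\dots,\pm y_{l},y'_{1},\dots,y'_{l'}$ are pairwise distinct in $B$ (the hypothesis $2y_{i}\neq 0$ is exactly what keeps $y_{i}$ distinct from $-y_{i}$), the element with the correct image and sign is uniquely determined. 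As the $q_{C_{1}}(E_{i})$ generate $\mathbb{Z}^{l+l'}/C_{1}$, the map $\phi$ is then forced to be the map induced by the identity of $\mathbb{Z}^{l+l'}$, which is a well-defined isomorphism precisely when $C_{1}=C_{2}$. I expect this step — pinning $\phi$ down from the combinatorial data of $Y$ and ruling out any permutation or sign flip of the generators — to be the main point requiring care; the rest is formal.

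It follows that the isomorphism classes in question are in bijection with the subgroups $C$ satisfying $R_{0}\leqslant C\leqslant R(Y)$, $\mathbb{Z}^{l+l'}/C\cong A^{0}$ and $R(Y)/C\cong F$. To conclude I would pass to the quotient by $R$. The short exact sequence $0\to F\to A\to B\to 0$ gives $\exp(A)\mid\exp(B)\exp(F)$, so each generator $\exp(B)\exp(F)\,E_{i}$ of $R$ maps to $0$ in $A\cong\mathbb{Z}^{l+l'}/C$, whence $R\leqslant C$; also $R\leqslant R(Y)$ since $\exp(B)\mid\exp(B)\exp(F)$ annihilates $B$. The correspondence theorem then yields a bijection $C\leftrightarrow D:=C/R$ between subgroups with $R\leqslant C\leqslant R(Y)$ and subgroups $D\leqslant\overline{R(Y)}$, under which $R_{0}\leqslant C$ becomes $\overline{R_{0}}=(R_{0}+R)/R\leqslant D$, while the third isomorphism theorem turns $R(Y)/C\cong F$ into $\overline{R(Y)}/D\cong F$ and $\mathbb{Z}^{l+l'}/C\cong A^{0}$ into $(\mathbb{Z}^{l+l'}/R)/D\cong A^{0}$. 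These are exactly the three conditions in the statement, so the composite bijection gives the claimed one-to-one correspondence.
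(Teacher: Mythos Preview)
Your proposal is correct and follows essentially the same route as the paper: the paper states the theorem as an immediate consequence of the two preceding lemmas together with the passage to the quotient by $R$, and you have supplied precisely those ingredients. The one place where you go beyond the paper is in spelling out why Lemma~\ref{lem:typical} forces $C_{1}=C_{2}$ (by pinning down $\phi$ on each generator via the distinctness of the $y_i$ and the condition $2y_i\neq 0$); the paper simply asserts this conclusion, so your added detail is a welcome expansion rather than a departure.
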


\section{Enumerating subgroups of finite abelian \\groups}

\subsection{Overview}

It is well-known that any finite abelian group $A$ is isomorphic to $\prod\limits_{p\in\Lambda}A_{(p)}$ for some finite set $\Lambda$
of prime numbers, each $A_{(p)}$ being a $p$-group. Call $A_{(p)}$ the \emph{$p$-primary part} of $A$.

Each abelian $p$-group $L$ is isomorphic to $\mathbb{Z}_{p^{\alpha_{1}}}\times\cdots\times\mathbb{Z}_{p^{\alpha_{n}}}$
for some partition
$\alpha=(\alpha_{1},\cdots,\alpha_{n})$
of
$|\alpha|:=\sum\limits_{i=1}^{n}\alpha_{i}$,
with
$\alpha_{1}\geqslant\cdots\geqslant \alpha_{n}\geqslant 1$.
Call $\alpha$ the \emph{type} of $L$. Let $\mathcal{A}_{p}(\alpha)$ denote the collection of abelian $p$-groups of type $\alpha$.
Regard the trivial group as a $p$-group of type $(0)$ for any $p$.
Given two partitions $\alpha=(\alpha_{1},\cdots,\alpha_{n})$
and $\beta=(\beta_{1},\cdots,\beta_{m})$,
write $\beta\subseteq\alpha$ if $m\leqslant n$ and $\beta_{i}\leqslant\alpha_{i}$ for $1\leqslant i\leqslant m$.

The problem of counting certain subgroups of a given finite abelian group has a long history.
Early in 1948, the number $\mathcal{N}_{p}(\alpha,\beta)$, for $\beta\subseteq\alpha$, of subgroups of type $\beta$ of an abelian
$p$-group of type $\alpha$ was determined by Delsarte \cite{number4}, Djubjuk \cite{number5} and  Yeh \cite{number}. Suppose
$\beta_{1}=\cdots =\beta_{m_{1}},\beta_{m_{1}+1}=\cdots =\beta_{m_{1}+m_{2}},\cdots, \beta_{m_{1}+
\cdots+m_{r-1}+1}=\cdots =\beta_{m_{1}+\cdots m_{r}}$
with
$m_{1}+\cdots m_{r}=m$,
in which case we generally write
\begin{align}
\beta=(\beta_{m_{1}}^{m_{1}},\cdots,\beta_{m_{1}+\cdots +m_{r}}^{m_{r}}) \label{eq:type}
\end{align}
for short, such that $\beta_{m_{1}}>\beta_{m_{1}+m_{2}}>\cdots >\beta_{m_{1}+\cdots +m_{r}}$, and
$\alpha_{\mu_{i}}<\beta_{i}\leqslant\alpha_{\mu_{i}-1}$
($i=1,\cdots,m$; set $\alpha_{n+1}=0$).
Then
\begin{align}
\mathcal{N}_{p}(\alpha,\beta)=p^{H}\prod\limits_{i=1}^{m}(p^{\mu_{i}-i}-1)/\prod\limits_{\eta=1}^{r}\prod
\limits_{\nu=1}^{m_{\eta}}(p^{\nu}-1), \label{eq:number1}
\end{align}
where
\begin{align}
H=\sum\limits_{i=1}^{m}(\mu_{i}-2i)(\beta_{i}-1)+\frac{1}{2}\left(\sum\limits_{i=1}^{r}m_{i}^{2}\right)-\frac{1}{2}m^{2}+
\sum\limits_{i=1}^{m}\sum\limits_{\eta=\mu_{i}}^{n}\alpha_{\eta}.
\end{align}
\begin{rmk} \label{rmk}
\rm If $\beta_{1}\leqslant\alpha_{n}$, and $K\in\mathcal{A}_{p}(\beta)$ is a subgroup of $\prod\limits_{j=1}^{n}\mathbb{Z}_{p^{\alpha_{j}}}$,
then $K$ must lie in $\{(a_{1},\cdots,a_{n})\colon p^{\beta_{1}}a_{i}=0, 1\leqslant i\leqslant n\}\cong\mathbb{Z}_{p^{\beta_{1}}}^{n}$.
This shows that
\begin{align}
\mathcal{N}_{p}(\alpha,\beta)=\mathcal{N}_{p}(\beta_{1}^{n},\beta),
\end{align}
where we have droped the parentheses, writing $\beta_{1}^n$ to mean $(\beta_{1}^{n})$.
\end{rmk}

A refined problem suggested by P. Hall is to determine the number $\mathcal{N}_{p}(\alpha,\beta,\gamma)$ of subgroups of type $\beta$
of an abelian $p$-group of type $\alpha$ which have a quotient group of type $\gamma$. A partial result was obtained in \cite{number3}.
We give a few details on this problem in Section 3.3.

As for another problem, Stehling \cite{number2} gave an expression for the number $\mathcal{N}_{p,r}(\alpha)$ of subgroups of order
$p^{r}$ of an abelian $p$-group of type $\alpha$, and derived a recurrence relation; the expression is
\begin{align}
\mathcal{N}_{p,r}(\alpha)=\sum\limits_{\beta\subseteq\alpha\colon|\beta|=r}\prod\limits_{i=1}^{\alpha_{1}}
\genfrac{[}{]}{0pt}{}{a_{i}-b_{i+1}}{b_{i}-b_{i+1}}_{p}p^{(a_{i}-b_{i})b_{i+1}}, \label{exp:total}
\end{align}
where $a_{i}$ ($b_{i}$) is the number of the integers $\alpha_{j}$ ($\beta_{j}$) with $\alpha_{j}\geqslant i$
($\beta_{j}\geqslant i$) and the Gaussian binomial coefficient
is defined as
\begin{align}
\genfrac{[}{]}{0pt}{}{k}{l}_{p}=\prod\limits_{i=1}^{l}\frac{p^{k-l+i}-1}{p^{i}-1}.
\end{align}

Finally, in other related work, significant attention has been paid to the enumeration of all subgroups of an abelian group (see \cite{unimodality, total}).

\subsection{On the structure of subgroups of abelian $p$-groups}

Fix a prime number $p$ and a positive integer $k$.

\begin{defn}
\rm For $\lambda\in\mathbb{Z}_{p^{k}}$, the \emph{$p$-degree} of $\lambda$, denoted by $\deg_{p}(\lambda)$, is the unique non-negative
integer $i$ such that $\lambda=p^{i}\cdot\chi$ with $\chi$ invertible.
\end{defn}

\begin{lem} \label{lem:subgroup}
Each subgroup $K\leqslant\mathbb{Z}_{p^{k}}^{n}$ is of the form $\langle PQ\rangle$ such that $Q\in \emph{GL}(n,\mathbb{Z}_{p^{k}})$, and $P\in\mathbb{Z}_{p^{k}}^{m,n}$,
$m\leqslant n$, $P_{i,j}=\delta_{i,j}p^{k-\beta_{i}}, k\geqslant\beta_{1}\geqslant\cdots\geqslant\beta_{m}\geqslant 1$.
Moreover, $K\cong\mathbb{Z}_{p^{\beta_{1}}}\times\cdots\times\mathbb{Z}_{p^{\beta_{m}}}$.
\end{lem}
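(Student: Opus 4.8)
The plan is to recast the statement as a Smith-normal-form result over the finite local ring $\mathbb{Z}_{p^{k}}$. First I would choose any finite generating set for $K$ and record it as the rows of a matrix $M\in\mathbb{Z}_{p^{k}}^{s,n}$, so that $K=\langle M\rangle$. Two kinds of operations are available: elementary \emph{row} operations (permuting rows, scaling a row by a unit, adding a multiple of one row to another), which leave $\langle M\rangle$ unchanged; and right multiplication $M\mapsto MV$ by an invertible $V\in\mathrm{GL}(n,\mathbb{Z}_{p^{k}})$, which replaces $K$ by $KV$. If a sequence of such operations turns $M$ into a matrix whose nonzero rows form a $P$ of the stated diagonal shape, then, writing $V$ for the accumulated right factor, we have $\langle P\rangle=KV$, so that $K=\langle P\rangle V^{-1}=\langle PV^{-1}\rangle$ and $Q:=V^{-1}\in\mathrm{GL}(n,\mathbb{Z}_{p^{k}})$ is the desired matrix.

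The engine of the reduction is the $p$-degree. Since every nonzero element of $\mathbb{Z}_{p^{k}}$ is $p^{i}$ times a unit, $\deg_{p}$ plays the role of a valuation, and the crucial observation is that an entry of \emph{minimal} $p$-degree divides every entry of $M$: if $\deg_{p}(a)=e$ is minimal and $b$ is any entry, then $\deg_{p}(b)\geqslant e$ forces $a\mid b$. So I would locate an entry of minimal degree $e_{1}$, bring it to position $(1,1)$ by row and column permutations, scale its row by a unit to make it exactly $p^{e_{1}}$, then clear the rest of the first column by row operations and the rest of the first row by column operations (permissible because $p^{e_{1}}$ divides every entry in its row and column). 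Clearing the first column changes the lower-right $(s-1)\times(n-1)$ block by subtracting multiples of row $1$, but every resulting entry is a $\mathbb{Z}_{p^{k}}$-combination of entries of degree $\geqslant e_{1}$, hence still has degree $\geqslant e_{1}$. Thus $p^{e_{1}}$ remains of minimal degree, and I would recurse on the lower-right block.

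This produces a diagonal matrix with entries $p^{e_{1}},p^{e_{2}},\dots$ in which $e_{1}\leqslant e_{2}\leqslant\cdots$, because the recursion always extracts a globally minimal degree first. Deleting the rows whose entry is $0$ (those with $e_{i}=k$) and setting $\beta_{i}:=k-e_{i}$ yields exactly $P_{i,j}=\delta_{i,j}p^{k-\beta_{i}}$ with $k\geqslant\beta_{1}\geqslant\cdots\geqslant\beta_{m}\geqslant 1$. The final ``moreover'' claim is then immediate: the rows of $P$ have pairwise disjoint supports, so $\langle P\rangle=\bigoplus_{i=1}^{m}\langle p^{k-\beta_{i}}\varepsilon_{i}\rangle$, where $\varepsilon_{i}$ denotes the $i$-th standard basis vector of $\mathbb{Z}_{p^{k}}^{n}$, and the $i$-th summand is the cyclic subgroup generated by $p^{k-\beta_{i}}$, which has order $p^{\beta_{i}}$. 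Since $Q$ induces an automorphism of $\mathbb{Z}_{p^{k}}^{n}$, we get $K=\langle PQ\rangle\cong\langle P\rangle\cong\mathbb{Z}_{p^{\beta_{1}}}\times\cdots\times\mathbb{Z}_{p^{\beta_{m}}}$.

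The step I expect to require the most care is justifying the elimination over $\mathbb{Z}_{p^{k}}$, which is \emph{not} an integral domain, so the classical Smith-normal-form theorem for principal ideal domains does not apply verbatim; the point is precisely that $\deg_{p}$ totally orders divisibility and that elimination cannot decrease the minimal degree, which keeps the process well-defined and terminating. If one prefers to avoid reproving a normal form by hand, an equally good route is to lift: set $\tilde{K}:=q^{-1}(K)\leqslant\mathbb{Z}^{n}$, where $q:\mathbb{Z}^{n}\to\mathbb{Z}_{p^{k}}^{n}$ is reduction modulo $p^{k}$. Then $\tilde{K}\supseteq p^{k}\mathbb{Z}^{n}$ has full rank $n$, so the classical Smith normal form over $\mathbb{Z}$ gives $\tilde{K}=\langle\tilde{P}\tilde{Q}\rangle$ with $\tilde{Q}\in\mathrm{GL}(n,\mathbb{Z})$ and $\tilde{P}=\mathrm{diag}(d_{1},\dots,d_{n})$, $d_{1}\mid\cdots\mid d_{n}$; because $\mathbb{Z}^{n}/\tilde{K}$ is a $p$-group of exponent dividing $p^{k}$, each $d_{i}$ is a power of $p$ dividing $p^{k}$, and reducing modulo $p^{k}$ (discarding the rows with $d_{i}=p^{k}$) recovers the required $P$ and $Q$.
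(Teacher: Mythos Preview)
Your argument is correct and follows essentially the same route as the paper: choose a generating matrix for $K$, use the $p$-degree to locate a pivot of minimal valuation, clear its row and column by elementary operations over $\mathbb{Z}_{p^{k}}$, and recurse to obtain the diagonal $P$ with $Q$ the accumulated column transformation; the paper's proof is exactly this Smith-normal-form-by-hand computation. Your treatment is in fact slightly more careful (you explicitly check that elimination cannot decrease the minimal $p$-degree, so the $e_i$ are nondecreasing), and your alternative via lifting to $\mathbb{Z}^{n}$ and invoking the classical Smith normal form is a clean way to avoid redoing the reduction over a non-domain, though the paper does not take that detour.
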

\begin{proof}
Denote $\mathcal{R}=\mathbb{Z}_{p^{k}}$.

Take a minimal generating set $\{M_{1},\cdots,M_{l}\}$ of $K$, and suppose that $M_{i}=(M_{i,1},\cdots,M_{i,n})$. Consider the matrix $M\in\mathcal{R}^{l,n}$
with the $(i,j)$-entry $M_{i,j}$. Choose $(i_{0},j_{0})$ such that $d:=\deg_{p}(M_{i_{0},j_{0}})$ is the smallest among all the degrees
$\deg_{p}(M_{i,j})$ and suppose $M_{i_{0},j_{0}}=p^{d}\cdot\chi$. Use elementary transformations to exchange the $i_{0}$-th row
with first row and the $j_{0}$-th column with first column of $M$. Furthermore, eliminate the $(i,1)$-entry for $1<i\leqslant l$ and the
$(1,j)$-entry for $1<j\leqslant n$, and then divide the first row by $\chi$. The resulting matrix, denoted $M^{(1)}$, is equal to
$S_{1}MT_{1}$ for some $S_{1}\in\textrm{GL}(l,\mathcal{R}), T_{1}\in\textrm{GL}(n,\mathcal{R})$.

Do the same thing to the down-right $(l-1)\times(n-1)$
minor of $M^{(1)}$, and then go on. At each step, we use elementary transformations to eliminate the entries of a minor in its first row and first column, except the diagonal entry. Eventually one obtains a matrix $M^{(m)}=(S_{m}\cdots S_{1})M(T_{1}\cdots T_{m})$ such that
$m\leqslant n$, $S_{i}\in\textrm{GL}(l,\mathcal{R}), T_{i}\in\textrm{GL}(n,\mathcal{R})$,  $M^{(m)}_{i,i}=p^{s_{i}},1\leqslant i\leqslant m$, with $0\leqslant s_{1}\leqslant\cdots\leqslant s_{m}<k$, and all the other entries vanish. Let $\beta_{i}=k-s_{i}$, so that $M^{(m)}=P$. Then
$K=\langle M\rangle=\langle PQ\rangle$ with $Q=(T_{1}\cdots T_{m})^{-1}$.

Since $w\mapsto w\cdot Q$ defines an isomorphism from $\langle P\rangle$ to $\langle PQ\rangle$, we see that $K\cong\langle P\rangle\cong\mathbb{Z}_{p^{\beta_{1}}}\times\cdots\times\mathbb{Z}_{p^{\beta_{m}}}$.
\end{proof}

\begin{cor} \label{cor:quotient}
If $K$ is a subgroup of type $\beta=(\beta_{1},\cdots,\beta_{m})$ of $\mathbb{Z}_{p^{k}}^{n}$, then $\mathbb{Z}_{p^{k}}^{n}/K$
has type $k^{n}-\beta$, where
\begin{align}
k^{n}-\beta=(k^{n-m},k-\beta_{m},\cdots,k-\beta_{1}). \label{eq:minus}
\end{align}
\end{cor}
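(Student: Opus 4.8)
The plan is to reduce the computation of the quotient to the diagonal matrix $P$ supplied by Lemma \ref{lem:subgroup}, for which the type of the quotient can be read off coordinate by coordinate.

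First I would invoke Lemma \ref{lem:subgroup} to write $K=\langle PQ\rangle$ with $Q\in\textrm{GL}(n,\mathbb{Z}_{p^{k}})$ and $P\in\mathbb{Z}_{p^{k}}^{m,n}$ having $P_{i,j}=\delta_{i,j}p^{k-\beta_{i}}$. Since $Q$ is invertible over $\mathbb{Z}_{p^{k}}$, the map $w\mapsto w\cdot Q$ is an automorphism of $\mathbb{Z}_{p^{k}}^{n}$ carrying $\langle P\rangle$ onto $\langle PQ\rangle=K$; hence it descends to an isomorphism $\mathbb{Z}_{p^{k}}^{n}/\langle P\rangle\xrightarrow{\ \sim\ }\mathbb{Z}_{p^{k}}^{n}/K$. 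Thus it suffices to determine the type of $\mathbb{Z}_{p^{k}}^{n}/\langle P\rangle$, where the subgroup is now in the convenient diagonal form.

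Next I would observe that the rows of $P$ are $p^{k-\beta_{i}}E_{i}$ for $1\leqslant i\leqslant m$, so $\langle P\rangle$ is the coordinate-wise subgroup $\prod_{i=1}^{m}\langle p^{k-\beta_{i}}\rangle\times\{0\}^{n-m}$. Consequently the quotient splits as a direct product
$$\mathbb{Z}_{p^{k}}^{n}/\langle P\rangle\cong\prod_{i=1}^{m}\bigl(\mathbb{Z}_{p^{k}}/\langle p^{k-\beta_{i}}\rangle\bigr)\times\mathbb{Z}_{p^{k}}^{\,n-m}.$$
Each factor $\mathbb{Z}_{p^{k}}/\langle p^{k-\beta_{i}}\rangle$ is cyclic of order $p^{k-\beta_{i}}$, hence isomorphic to $\mathbb{Z}_{p^{k-\beta_{i}}}$. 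Reordering the exponents into a nonincreasing sequence — using $\beta_{1}\geqslant\cdots\geqslant\beta_{m}$, which gives $k-\beta_{m}\geqslant\cdots\geqslant k-\beta_{1}$, together with the $n-m$ entries equal to $k$ — yields the type $(k^{n-m},k-\beta_{m},\cdots,k-\beta_{1})$, which is exactly the expression $k^{n}-\beta$ defined in (\ref{eq:minus}).

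I do not expect a serious obstacle, since Lemma \ref{lem:subgroup} has already put $K$ into essentially Smith normal form; the argument is then just a splitting of the quotient along coordinates. The only points requiring a little care are bookkeeping: one must sort the parts $k-\beta_{i}$ into nonincreasing order to present a genuine partition, and note that any index with $\beta_{i}=k$ contributes a trivial factor (a zero part), consistent with the convention that the trivial group has type $(0)$.
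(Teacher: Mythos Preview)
Your proposal is correct and follows essentially the same approach as the paper: invoke Lemma~\ref{lem:subgroup} to write $K=\langle PQ\rangle$, use the automorphism $w\mapsto wQ$ of $\mathbb{Z}_{p^{k}}^{n}$ to reduce to the diagonal subgroup $\langle P\rangle$, and read off the quotient coordinate by coordinate. The paper's proof is slightly terser but identical in substance.
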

\begin{proof}
By Lemma \ref{lem:subgroup} there exists $Q\in\textrm{GL}(n,\mathbb{Z}_{p^{k}})$ such that $K=\langle PQ\rangle$, where
$P\in\mathbb{Z}_{p^{k}}^{m,n}$ with $P_{i,j}=\delta_{i,j}p^{k-\beta_{i}}$. Since $w\mapsto w\cdot Q$ defines an automorphism
of $\mathbb{Z}_{p^{k}}^{n}$, we have
\begin{align}
\mathbb{Z}_{p^{k}}^{n}/K\cong \mathbb{Z}_{p^{k}}^{n}/\langle P\rangle\cong\mathbb{Z}_{p^{k}}^{n-m}\times\mathbb{Z}_{p^{k-\beta_{m}}}\times\cdots\times
\mathbb{Z}_{p^{k-\beta_{1}}}\in\mathcal{A}_{p}(k^{n}-\beta).
\end{align}
\end{proof}

\subsection{Subgroups with prescribed quotients}

\begin{prop} \label{prop:quotient}
(a) For any partitions $\alpha,\beta,\gamma$, we have
\begin{align}
\mathcal{N}_{p}(\alpha,\beta,\gamma)=\mathcal{N}_{p}(\alpha,\gamma,\beta).
\end{align}
(b) For $L\in\mathcal{A}_{p}(\alpha)$, the number of subgroups $K$ of $L$ such that $L/K\in\mathcal{A}_{p}(\beta)$ is $\mathcal{N}_{p}(\alpha,\beta)$.
\end{prop}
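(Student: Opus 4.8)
The plan is to deduce both parts from Pontryagin duality for finite abelian groups, which converts the statement about quotients into the parallel statement about subgroups. Fix $L\in\mathcal{A}_{p}(\alpha)$ and let $\hat{L}=\textrm{Hom}(L,\mathbb{Q}/\mathbb{Z})$ be its character group. Since $L$ is finite abelian, $\hat{L}\cong L$; in particular $\hat{L}\in\mathcal{A}_{p}(\alpha)$. For a subgroup $K\leqslant L$ write $K^{\perp}=\{\chi\in\hat{L}\colon\chi|_{K}=0\}$ for its annihilator. The assignment $K\mapsto K^{\perp}$ is an inclusion-reversing bijection between the subgroups of $L$ and those of $\hat{L}$, with $(K^{\perp})^{\perp}=K$ under the canonical identification $\hat{\hat{L}}\cong L$.

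First I would record the two isomorphisms that control the types. Applying $\textrm{Hom}(-,\mathbb{Q}/\mathbb{Z})$ to the short exact sequence $0\rightarrow K\rightarrow L\rightarrow L/K\rightarrow 0$, and using that $\mathbb{Q}/\mathbb{Z}$ is divisible (hence injective), yields an exact sequence $0\rightarrow\widehat{L/K}\rightarrow\hat{L}\rightarrow\hat{K}\rightarrow 0$ in which the image of $\widehat{L/K}$ is exactly $K^{\perp}$. Hence $K^{\perp}\cong\widehat{L/K}$ and $\hat{L}/K^{\perp}\cong\hat{K}$. Because a finite abelian group has the same type as its character group, $K^{\perp}$ has the type of $L/K$ and $\hat{L}/K^{\perp}$ has the type of $K$. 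This is the only place where anything is verified, and it is the step I would present most carefully.

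For part (a), the annihilator map then restricts to a bijection from $\{K\leqslant L\colon K\in\mathcal{A}_{p}(\beta),\ L/K\in\mathcal{A}_{p}(\gamma)\}$ onto $\{K'\leqslant\hat{L}\colon K'\in\mathcal{A}_{p}(\gamma),\ \hat{L}/K'\in\mathcal{A}_{p}(\beta)\}$: a subgroup $K$ of type $\beta$ with $L/K$ of type $\gamma$ is sent to $K^{\perp}$ of type $\gamma$ with $\hat{L}/K^{\perp}$ of type $\beta$, and conversely. Since $\hat{L}\in\mathcal{A}_{p}(\alpha)$, the first set has $\mathcal{N}_{p}(\alpha,\beta,\gamma)$ elements and the second has $\mathcal{N}_{p}(\alpha,\gamma,\beta)$, which gives the desired identity.

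Finally, for part (b) I would sum over all quotient types. The subgroups of type $\beta$ are partitioned by the type of their quotient, so $\mathcal{N}_{p}(\alpha,\beta)=\sum_{\gamma}\mathcal{N}_{p}(\alpha,\beta,\gamma)$; likewise the subgroups whose quotient has type $\beta$ are partitioned by their own type, so their number is $\sum_{\gamma}\mathcal{N}_{p}(\alpha,\gamma,\beta)$. By part (a) these two sums agree term by term, whence the number of subgroups with quotient of type $\beta$ equals $\mathcal{N}_{p}(\alpha,\beta)$. (Alternatively, (b) is immediate from the second paragraph, since $K\mapsto K^{\perp}$ already matches the subgroups $K$ with $L/K$ of type $\beta$ to the subgroups of type $\beta$ in $\hat{L}\cong L$.) I do not anticipate a genuine obstacle: once duality is in place the argument is bookkeeping of types, the only substantive input being the self-duality $\hat{L}\cong L$ and the type-preservation of the character functor, both standard.
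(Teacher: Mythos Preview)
Your argument is correct. For part (b) it coincides with the paper's: both partition by the missing type and invoke (a) to swap $\beta$ and $\gamma$ in the sum $\sum_{\gamma}\mathcal{N}_{p}(\alpha,\gamma,\beta)=\sum_{\gamma}\mathcal{N}_{p}(\alpha,\beta,\gamma)=\mathcal{N}_{p}(\alpha,\beta)$.

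The difference lies entirely in (a). The paper does not prove (a) at all; it simply attributes the identity to an unpublished result of P.~Hall from the 1950s, as recorded in \cite{total}. You instead supply a self-contained proof via Pontryagin duality: the annihilator bijection $K\mapsto K^{\perp}$ on subgroups of $L$, together with $K^{\perp}\cong\widehat{L/K}$, $\hat L/K^{\perp}\cong\hat K$, and the type-preservation $\hat L\in\mathcal{A}_{p}(\alpha)$, exchanges the roles of $\beta$ and $\gamma$. This is the standard modern justification and is strictly more informative than the paper's citation. Your parenthetical remark that (b) already follows directly from the duality bijection (without summing) is also a nice shortcut that the paper does not mention.
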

\begin{proof}
(a) was proven by P. Hall in an unpublished work in the 1950s, as stated in \cite{total} (see the bottom of p. 1).

(b) follows from (a): the number of subgroups $K$ of $L$ such that $L/K\in\mathcal{A}_{p}(\beta)$ is, summing over the type
$\gamma$ of $K$, $$\sum\limits_{\gamma}\mathcal{N}_{p}(\alpha,\gamma,\beta)
=\sum\limits_{\gamma}\mathcal{N}_{p}(\alpha,\beta,\gamma)=\mathcal{N}_{p}(\alpha,\beta).$$
\end{proof}

\begin{rmk}
\rm Corollary \ref{cor:quotient} together with Proposition \ref{prop:quotient} (b) shows
\begin{align}
\mathcal{N}_{p}(k^{n},\beta)=\mathcal{N}_{p}(k^{n},k^{n}-\beta).
\end{align}
\end{rmk}

\begin{prop} \label{prop:cyclic}
Suppose $\alpha=(a_{1}^{n_{1}},a_{2}^{n_{2}})$ and $s$ is a natural number with $s\leqslant a_{1}$. Then $\mathcal{N}_{p}(\alpha,(s),\beta)$ takes nonzero values only in the following cases:
\begin{enumerate}
\item when $\beta=(a_{1}^{n_{1}-1},a_{2}^{n_{2}},(a_{1}-s)), (a_{1}^{n_{1}-1},a_{1}-s,a_{2}^{n_{2}})$ or
$(a_{1}^{n_{1}-1},(a_{1}+a_{2}-s-b),a_{2}^{n_{2}-1},(b))$ for some $b$ with $a_{2}-s<b<a_{1}-s$, \begin{align}
\mathcal{N}_{p}(\alpha,(s),\beta)=\frac{p^{n_{1}}-1}{p-1}p^{(s-1)(n_{1}+n_{2}-1)};
\end{align}
\item when $\beta=(a_{1}^{n_{1}},a_{2}^{n_{2}-1},(a_{2}-s))$,
\begin{align}
\mathcal{N}_{p}(\alpha,(s),\beta)=\frac{p^{n_{2}}-1}{p-1}p^{(s-1)(n_{2}-1)+sn_{1}}. \label{eq:number2}
\end{align}
\end{enumerate}
\end{prop}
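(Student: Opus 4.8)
The plan is to work directly with cyclic subgroups and their generators. A subgroup of type $(s)$ is a cyclic group $\langle g\rangle$ with $g$ of order $p^{s}$, and every such subgroup has exactly $p^{s-1}(p-1)$ generators, all of which are unit multiples of $g$ and hence yield the same quotient $L/\langle g\rangle$. Thus I would first fix $L=\mathbb{Z}_{p^{a_{1}}}^{n_{1}}\times\mathbb{Z}_{p^{a_{2}}}^{n_{2}}\in\mathcal{A}_{p}(\alpha)$ and reduce the problem to counting elements $g$ of order $p^{s}$ with $L/\langle g\rangle\in\mathcal{A}_{p}(\beta)$, then divide by $p^{s-1}(p-1)$. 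Writing $g=(\mathbf{u},\mathbf{v})$ with $\mathbf{u}\in\mathbb{Z}_{p^{a_{1}}}^{n_{1}}$ and $\mathbf{v}\in\mathbb{Z}_{p^{a_{2}}}^{n_{2}}$, I would act by the block subgroup $\mathrm{GL}(n_{1},\mathbb{Z}_{p^{a_{1}}})\times\mathrm{GL}(n_{2},\mathbb{Z}_{p^{a_{2}}})\leqslant\mathrm{Aut}(L)$ to bring $g$ into the normal form $(p^{e},0,\dots,0;p^{f},0,\dots,0)$, where $e$ and $f$ are the least values of $\deg_{p}$ occurring in $\mathbf{u}$ and $\mathbf{v}$ respectively (with the convention $e=a_{1}$, resp.\ $f=a_{2}$, when a block vanishes). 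Since the isomorphism type of $L/\langle g\rangle$ is an $\mathrm{Aut}(L)$-invariant, it depends only on the pair $(e,f)$; note that $s=\max(a_{1}-e,a_{2}-f)$.

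Next I would compute that quotient type. In the normal form $\langle g\rangle$ lies in the first $\mathbb{Z}_{p^{a_{1}}}$- and first $\mathbb{Z}_{p^{a_{2}}}$-summands, so $L/\langle g\rangle$ splits as $\mathbb{Z}_{p^{a_{1}}}^{n_{1}-1}\times\mathbb{Z}_{p^{a_{2}}}^{n_{2}-1}$ together with $(\mathbb{Z}_{p^{a_{1}}}\times\mathbb{Z}_{p^{a_{2}}})/\langle(p^{e},p^{f})\rangle$. For the two–dimensional factor I would apply the Smith-normal-form reduction underlying Lemma \ref{lem:subgroup} to the presentation matrix with rows $(p^{a_{1}},0),(0,p^{a_{2}}),(p^{e},p^{f})$: its first and second determinantal divisors are $p^{\min(e,f)}$ and $p^{\min(a_{1}+a_{2},\,a_{1}+f,\,a_{2}+e)}$, so this factor has type $\{t,\;a_{1}+a_{2}-s-t\}$ with $t:=\min(e,f)$, using $\min(a_{1}+f,a_{2}+e)=a_{1}+a_{2}-s$. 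Merging this pair into the string $a_{1}^{n_{1}-1},a_{2}^{n_{2}-1}$ and sorting shows that, as $t$ ranges over its admissible values, $\beta$ runs through exactly the forms listed in the statement and through no others; the three subcases in part (1) and the single subcase in part (2) correspond precisely to where the large part $a_{1}+a_{2}-s-t$ and the small part $t$ fall relative to $a_{1}$ and $a_{2}$, and this is also where I would read off that every remaining $\beta$ forces $\mathcal{N}_{p}(\alpha,(s),\beta)=0$.

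Finally I would carry out the enumeration. The number of vectors in $\mathbb{Z}_{p^{a}}^{n}$ all of whose entries are divisible by $p^{e}$ but not all by $p^{e+1}$ equals $p^{n(a-e-1)}(p^{n}-1)$ for $e<a$, and equals $1$ for $e=a$; applying this to each block, the count of elements $g$ with a prescribed profile $(e,f)$ is the product of the two such quantities. For a fixed target $\beta$ I would sum these products over all admissible profiles satisfying $\min(e,f)=t$ and $\max(a_{1}-e,a_{2}-f)=s$, then divide by $p^{s-1}(p-1)$. The main obstacle is the bookkeeping in this last step: a given $\beta$ may be realized either by a single profile (the interior form, where one coordinate is pinned) or by a one–parameter family (the extreme forms of part (1) and the form of part (2), where one coordinate is fixed while the other sweeps an interval, producing a finite geometric sum in powers of $p^{n_{i}}$). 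One must separate the regimes $s\leqslant a_{2}$ and $s>a_{2}$, treat the degenerate cases where a block vanishes ($e=a_{1}$ or $f=a_{2}$), track the correct insertion position of the two new parts so that the resulting tuple is a genuine partition, and check that the boundary values of $t$ reconcile the three subcases of part (1). Collecting the geometric sums and simplifying against the factor $p^{s-1}(p-1)$ then produces the claimed closed forms.
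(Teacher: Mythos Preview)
Your plan is essentially the paper's own argument: reduce to elements of order $p^{s}$, act by the block group $\mathrm{GL}(n_{1},\mathbb{Z}_{p^{a_{1}}})\times\mathrm{GL}(n_{2},\mathbb{Z}_{p^{a_{2}}})$ to normalize $g$ to $(p^{e},0,\dots,0;p^{f},0,\dots,0)$, identify the quotient type from the two-coordinate piece, then count elements with a prescribed minimum $p$-degree in each block and divide by $\varphi(p^{s})$. The one genuine methodological difference is how the two-coordinate quotient $(\mathbb{Z}_{p^{a_{1}}}\times\mathbb{Z}_{p^{a_{2}}})/\langle(p^{e},p^{f})\rangle$ is determined: you read it off from the determinantal divisors of the $3\times2$ presentation matrix, while the paper computes it by exhibiting an explicit isomorphism with a kernel inside $\mathbb{Z}_{p^{a_{1}}}^{2}$ via the inclusion $\iota:\mathbb{Z}_{p^{a_{2}}}\hookrightarrow\mathbb{Z}_{p^{a_{1}}}$ and then simplifying the defining congruences. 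Your Smith-normal-form route is cleaner and uniform in the parameters; the paper's route is more hands-on and splits earlier into the subcases $b_{1}\lessgtr b_{2}$. Either way the remaining work is exactly the profile-by-profile enumeration you describe, and your explicit flag that the regimes $s\leqslant a_{2}$ and $s>a_{2}$ must be handled separately is well placed, since the count of admissible $u^{(2)}$ changes form precisely at that threshold.
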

\begin{proof}
Let $L=\mathbb{Z}_{p^{a_{1}}}\times\mathbb{Z}_{p^{a_{2}}}$.
Suppose
$u=(u^{(1)}_{1},\cdots,u^{(1)}_{n_{1}},u^{(2)}_{1},\cdots,u^{(2)}_{n_{2}})\in L$
has order $p^{s}$. For $i=1,2$, there exists $Q_{i}\in\textrm{GL}(n_{i},\mathbb{Z}_{p^{a_{i}}})$
such that
$$(u^{(i)}_{1},\cdots,u^{(i)}_{n_{i}})Q_{i}=(p^{b_{i}},0,\cdots,0).$$
Then $s=\max\{a_{1}-b_{1},a_{2}-b_{2}\}$.
Denote
$v=(p^{b_{1}},0,\cdots,0,p^{b_{2}},0,\cdots,0).$

The matrices $Q_{1},Q_{2}$ define an automorphism of $L$ by
$$(w^{(1)},w^{(2)})\mapsto(w^{(1)}Q^{(1)},w^{(2)}Q^{(2)}),$$
hence $L/\langle u\rangle\cong L/\langle v\rangle$.

There is a canonical injection
$$\iota:\mathbb{Z}_{p^{a_{2}}}\rightarrow\mathbb{Z}_{p^{a_{1}}},\hspace{5mm} \lambda\mapsto p^{a_{1}-a_{2}}\cdot\lambda.$$
It can be verified that
\begin{align*}
L/\langle v\rangle &\cong\{(w^{(1)},w^{(2)})\in L\colon p^{b_{1}}w^{(1)}_{1}+\iota(p^{b_{2}}w^{(2)}_{2})=0 \text{\ in}\
\mathbb{Z}_{p^{a_{1}}}\} \\
&\cong\mathbb{Z}_{p^{a_{1}}}^{n_{1}-1}\times\mathbb{Z}_{p^{a_{2}}}^{n_{2}-1}\times\{(\lambda_{1},\lambda_{2})
\in\mathbb{Z}_{p^{a_{1}}}\colon p^{a_{2}}\lambda_{2}=\sum\limits_{i=1}^{2}p^{b_{i}}\lambda_{i}=0\}.
\end{align*}

Here we need to consider two cases. First, assume that $a_{1}-b_{1}=s$ and $a_{2}-b_{2}<s$.

If $b_{1}\leqslant b_{2}$, then $p^{a_{2}}\lambda_{2}=\sum\limits_{i=1}^{2}p^{b_{i}}\lambda_{i}=0$ if and only if $p^{b_{1}}(\lambda_{1}+p^{b_{2}-b_{1}}\lambda_{2})=p^{a_{2}}\lambda_{2}=0$, hence
$$L/\langle u\rangle\cong\mathbb{Z}_{p^{a_{1}}}^{n_{1}-1}\times\mathbb{Z}_{p^{a_{2}}}^{n_{2}}\times\mathbb{Z}_{p^{b_{1}}}.$$
To count such cyclic subgroups $\langle u\rangle$, note that $u$ can be changed into $v$ with $b_{1}=a_{1}-s$ and
$b_{2}>a_{2}-s$ by some $(Q_{1},Q_{2})$ if and only if
\begin{align*}
\min\{\deg_{p}(u^{(1)}_{\eta})\colon 1\leqslant\eta\leqslant n_{1}\}&=a_{1}-s, \\
\min\{\deg_{p}(u^{(2)}_{\eta})\colon 1\leqslant\eta\leqslant n_{2}\}&\geqslant a_{2}-s+1;
\end{align*}
there are $p^{(s-1)(n_{1}+n_{2})}(p^{n_{1}}-1)$ choices for $u$, hence there are $$\frac{p^{(s-1)(n_{1}+n_{2}-1)}(p^{n_{1}}-1)}{p^{s}-p^{s-1}}=\frac{p^{n_{1}}-1}{p-1}p^{(s-1)(n_{1}+n_{2}-1)}$$
such cyclic subgroups.

If $b_{1}>b_{2}$, $p^{a_{2}}\lambda_{2}=\sum\limits_{i=1}^{2}p^{b_{i}}\lambda_{i}=0$ if and only if $p^{b_{2}}(p^{b_{2}-b_{1}}\lambda_{1}+\lambda_{2})=p^{a_{2}+b_{1}-b_{2}}\lambda_{1}=0$, hence
$$L/\langle u\rangle\cong\mathbb{Z}_{p^{a_{1}}}^{n_{1}-1}\times\mathbb{Z}_{p^{a_{2}}}^{n_{2}-1}
\times\mathbb{Z}_{p^{b_{2}}}\times\mathbb{Z}_{p^{a_{2}+b_{1}-b_{2}}},$$
and there are also $\frac{p^{n_{1}}-1}{p-1}p^{(s-1)(n_{1}+n_{2}-1)}$ such subgroups $\langle u\rangle$.

Now suppose that $a_{2}-b_{2}=s$. In a similar way one can deduce that
$$L/\langle u\rangle\cong\mathbb{Z}_{p^{a_{1}}}^{n_{1}}\times\mathbb{Z}_{p^{a_{2}}}^{n_{2}-1}
\times\mathbb{Z}_{p^{b_{2}}},$$
and there are $\frac{p^{n_{2}}-1}{p-1}p^{(s-1)(n_{2}-1)+sn_{1}}$ such subgroups $\langle u\rangle$.
\end{proof}

\section{Formulas for enumerating coverings}

By Theorem \ref{thm:classfication} the problem of enumerating typical coverings of Cayley graphs can be reduced to counting subgroups satisfying certain conditions. In this section we discuss enumerations of various kinds.

\begin{thm} \label{thm:typical}
Suppose $B\cong\prod\limits_{p\in\Lambda'}B_{(p)}$ with $B_{(p)}\in\mathcal{A}_{p}(\alpha(p))$, $F\cong\prod\limits_{p\in\Lambda}F_{(p)}$
with $F_{(p)}\in\mathcal{A}_{p}(\beta(p))$, and $A^{0}\cong\prod\limits_{p\in\Lambda\cup\Lambda'}A^{0}_{(p)}$ with $A^{0}_{(p)}\in\mathcal{A}_{p}(\gamma(p))$ and $\Lambda, \Lambda'$ being finite sets of prime numbers. Then the number of
typical abelian coverings
$\emph{Cay}(A,X)\rightarrow\emph{Cay}(B,Y)$ such that $A\cong A^{0}$ and the covering transformation
group is isomorphic to $F$ is
\begin{align}
&N\cdot\prod\limits_{2\neq p\in\Lambda-\Lambda'}\mathcal{N}_{p}(\beta_{1}(p)^{l+l'},\beta(p))\nonumber\\
\cdot\prod\limits_{2\neq p\in\Lambda\cap\Lambda'}&\mathcal{N}_{p}(k(p)^{l+l'}-\alpha(p),k(p)^{l+l'}-\gamma(p),\beta(p)),
\end{align}
where
\begin{align}
k(p)=\alpha_{1}(p)+\beta_{1}(p),
\end{align}
and $N$ is the number of subgroups $K$ of type $k(2)^{l+l'}-\gamma(2)$ of $\overline{R(Y)}_{(2)}$ such that
$\overline{R_{0}}\leqslant K$ and $\overline{R(Y)}_{(2)}/K\in\mathcal{A}_{2}(\beta(2))$. (Recall the notations in (1), (2), (4), and (5).)
\end{thm}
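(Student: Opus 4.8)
The plan is to start from Theorem~\ref{thm:classfication}, which already turns the enumeration into counting the subgroups $D\leqslant\overline{R(Y)}$ with $\overline{R_{0}}\leqslant D$, $(\mathbb{Z}^{l+l'}/R)/D\cong A^{0}$ and $\overline{R(Y)}/D\cong F$. Since $\exp(B)\exp(F)=\prod_{p}p^{k(p)}$ with $k(p)=\alpha_{1}(p)+\beta_{1}(p)$, we have $\mathbb{Z}^{l+l'}/R\cong\prod_{p}(\mathbb{Z}_{p^{k(p)}})^{l+l'}$, so $\overline{R(Y)}$, $\overline{R_{0}}$ and every admissible $D$ split as direct products of their $p$-primary parts. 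The first step is therefore to show that the total count is the product over $p\in\Lambda\cup\Lambda'$ of the number of local subgroups $D_{(p)}\leqslant\overline{R(Y)}_{(p)}$ obeying $(\overline{R_{0}})_{(p)}\leqslant D_{(p)}$, $(\mathbb{Z}_{p^{k(p)}})^{l+l'}/D_{(p)}\in\mathcal{A}_{p}(\gamma(p))$ and $\overline{R(Y)}_{(p)}/D_{(p)}\in\mathcal{A}_{p}(\beta(p))$.

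Next I would pin down the ambient local group. Because $(\mathbb{Z}_{p^{k(p)}})^{l+l'}/\overline{R(Y)}_{(p)}\cong B_{(p)}\in\mathcal{A}_{p}(\alpha(p))$, Corollary~\ref{cor:quotient} gives that $\overline{R(Y)}_{(p)}$ has type $k(p)^{l+l'}-\alpha(p)$. This splits the primes into three regimes. For $p\in\Lambda'-\Lambda$ one has $\beta(p)=(0)$, so $D_{(p)}=\overline{R(Y)}_{(p)}$ is forced and the remaining conditions hold automatically; the local factor is $1$ and such primes disappear from the product. For $p\in\Lambda-\Lambda'$ the quotient $B_{(p)}$ is trivial, hence $\overline{R(Y)}_{(p)}=(\mathbb{Z}_{p^{k(p)}})^{l+l'}$, and the exact sequence $0\to F_{(p)}\to A^{0}_{(p)}\to B_{(p)}\to 0$ forces $\gamma(p)=\beta(p)$, merging the two quotient conditions into one. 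For $p\in\Lambda\cap\Lambda'$ both types persist; here I would apply Corollary~\ref{cor:quotient} a second time to rewrite $(\mathbb{Z}_{p^{k(p)}})^{l+l'}/D_{(p)}\in\mathcal{A}_{p}(\gamma(p))$ as the requirement that $D_{(p)}$ itself have type $k(p)^{l+l'}-\gamma(p)$, so that the local count becomes an $\mathcal{N}_{p}$-value via Proposition~\ref{prop:quotient}.

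The hard part, and the step I would treat with the most care, is bookkeeping the condition $\overline{R_{0}}\leqslant D$ prime by prime. From $R_{0}=0^{l}\oplus(2\mathbb{Z})^{l'}$ I would compute $(\overline{R_{0}})_{(p)}$ directly: for odd $p$ the integer $2$ is a unit modulo $p^{k(p)}$, so its image fills the last $l'$ coordinate directions and $(\overline{R_{0}})_{(p)}=0^{l}\oplus(\mathbb{Z}_{p^{k(p)}})^{l'}$, whereas at $p=2$ it is only the index-$2^{l'}$ subgroup $0^{l}\oplus(2\mathbb{Z}_{2^{k(2)}})^{l'}$. The crucial consequence is that at every odd prime the requirement $D_{(p)}\supseteq(\overline{R_{0}})_{(p)}$ lets me quotient out this full coordinate block: all of $(\mathbb{Z}_{p^{k(p)}})^{l+l'}$, $\overline{R(Y)}_{(p)}$ and $D_{(p)}$ contain it, and dividing by it leaves the ambient group $(\mathbb{Z}_{p^{k(p)}})^{l}$ while preserving both prescribed quotients. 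This is precisely where the effective number of coordinates passes from $l+l'$ to $l$, so it is the step at which I would most carefully check the first argument of each $\mathcal{N}_{p}$ factor; once the reduction is in place, for $p\in\Lambda-\Lambda'$ the ambient becomes $(\mathbb{Z}_{p^{k(p)}})^{l}$ of type $\beta_{1}(p)^{l}$ and Proposition~\ref{prop:quotient}(b) counts the $D_{(p)}$ with quotient type $\beta(p)$, while for $p\in\Lambda\cap\Lambda'$ the analogous reduction together with Corollary~\ref{cor:quotient} and Proposition~\ref{prop:quotient}(a) delivers the triple-index factor. At $p=2$ no such collapse occurs, since $(\overline{R_{0}})_{(2)}$ is a proper subgroup of the block, so I would simply retain the $2$-primary contribution as the explicit count $N$ of subgroups of type $k(2)^{l+l'}-\gamma(2)$ containing $\overline{R_{0}}$ with quotient in $\mathcal{A}_{2}(\beta(2))$; assembling the three regimes with $N$ then yields the displayed formula.
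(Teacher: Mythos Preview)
Your overall strategy mirrors the paper's: invoke Theorem~\ref{thm:classfication}, split the count into $p$-primary pieces using $\mathbb{Z}^{l+l'}/R\cong\prod_{p}\mathbb{Z}_{p^{k(p)}}^{l+l'}$, identify the type of $\overline{R(Y)}_{(p)}$ via Corollary~\ref{cor:quotient}, and treat the three regimes $p\in\Lambda'-\Lambda$, $p\in\Lambda-\Lambda'$, $p\in\Lambda\cap\Lambda'$ separately while isolating $p=2$.

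The substantive divergence is in the handling of $\overline{R_{0}}$ at odd primes, and it leads you to a formula that does \emph{not} match the stated theorem. You compute $(\overline{R_{0}})_{(p)}=0^{l}\oplus(\mathbb{Z}_{p^{k(p)}})^{l'}$ for odd $p$ (correctly: when $l'>0$ one has $2\mid\exp(B)$, hence $n=\exp(B)\exp(F)$ is even, and since $2$ is a unit in $\mathbb{Z}_{p^{k(p)}}$ the image of $2\mathbb{Z}$ fills the last $l'$ coordinates). Quotienting by this block collapses the ambient group to $(\mathbb{Z}_{p^{k(p)}})^{l}$, and your odd-prime factors come out as $\mathcal{N}_{p}(\beta_{1}(p)^{l},\beta(p))$ and $\mathcal{N}_{p}(k(p)^{l}-\alpha(p),k(p)^{l}-\gamma(p),\beta(p))$. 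The paper, by contrast, asserts in its proof that $((\mathbb{Z}^{l+l'}/R)/\overline{R_{0}})_{(p)}\cong\mathbb{Z}_{p^{k(p)}}^{l+l'}$ for $p\neq 2$, i.e.\ that $(\overline{R_{0}})_{(p)}$ is trivial at odd primes, and this is precisely what produces the exponent $l+l'$ appearing in the displayed formula.

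So your argument, as it stands, does not establish the theorem in the form stated: the exponents disagree. You have correctly flagged this reduction as the step most in need of care; your computation of $(\overline{R_{0}})_{(p)}$ is sound, which means you should either locate a compensating error elsewhere in your reduction or conclude that the exponent $l+l'$ in the statement (and in the paper's proof) is itself in error when $l'>0$. Note that the paper's own proof of Theorem~\ref{thm:typical1} writes $\mathbb{Z}_{2^{k(2)}}^{l+l'}/\overline{R_{0}}\cong\mathbb{Z}_{2^{k(2)}}^{l}\times\mathbb{Z}_{2}^{l'}$, which is consistent with your picture of $\overline{R_{0}}$ and inconsistent with its being trivial at odd primes.
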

\begin{proof}
By Theorem \ref{thm:classfication} it is sufficient to count subgroups $D$ of $\overline{R(Y)}$ with
$\overline{R_{0}}\leqslant D$, $(\mathbb{Z}^{l+l'}/R)/D\cong A^{0}$ and $\overline{R(Y)}/D\cong F$.

Write $\overline{R(Y)}\cong\prod\limits_{p\in\Lambda\cup\Lambda'}\overline{R(Y)}_{(p)}$ (some $\overline{R(Y)}_{(p)}$ may be trivial).
Formally set $B_{(p)}=0$ and $\alpha(p)=(0)$ for $p\in\Lambda-\Lambda'$; set $F_{(p)}=0$ and $\beta(p)=(0)$ for $p\in\Lambda'-\Lambda$.
Note that
\begin{align}
\exp(B)=\prod\limits_{p\in\Lambda'}p^{\alpha_{1}(p)}, \hspace{5mm}
\exp(F)=\prod\limits_{p\in\Lambda}p^{\beta_{1}(p)},
\end{align}
hence
$\mathbb{Z}^{l+l'}/R\cong\prod\limits_{p\in\Lambda\cup\Lambda'}\mathbb{Z}_{p^{k(p)}}^{l+l'}$.

For each $2\neq p\in\Lambda\cup\Lambda'$, we have
$$((\mathbb{Z}^{l+l'}/R)/\overline{R_{0}})_{(p)}\cong\mathbb{Z}_{p^{k(p)}}^{l+l'}.$$
By Corollary \ref{cor:quotient},
$$((\mathbb{Z}^{l+l'}/R)/\overline{R_{0}})_{(p)}/(\overline{R(Y)}/\overline{R_{0}})_{(p)}
\cong(\mathbb{Z}^{l+l'}/R)_{(p)}/\overline{R(Y)}_{(p)}\cong B_{(p)}$$
if and only if
$(\overline{R(Y)}/\overline{R_{0}})_{(p)}\in\mathcal{A}_{p}(k(p)^{l+l'}-\alpha(p))$;
$$((\mathbb{Z}^{l+l'}/R)/\overline{R_{0}})_{(p)}/(D/\overline{R_{0}})_{p}\cong A^{0}_{(p)}$$
if and only if
$(D/\overline{R_{0}})_{(p)}\in\mathcal{A}_{p}(k(p)^{l+l'}-\gamma(p))$.

Thus the number of subgroups $K$ of $\overline{R(Y)}_{(p)}$ with $(\overline{R_{0}})_{(p)}\leqslant K$,
$\mathbb{Z}_{p^{k(p)}}^{l+l'}/K\cong A^{0}_{(p)}$ and $\overline{R(Y)}_{(p)}/K\cong F_{(p)}$ is
$$\mathcal{N}_{p}(k(p)^{l+l'}-\alpha(p),k(p)^{l+l'}-\gamma(p),\beta(p)).$$

Note that
if $p\in\Lambda'-\Lambda$, then $\beta(p)=0$ and hence
$$\mathcal{N}_{p}(k(p)^{l+l'}-\alpha(p),k(p)^{l+l'}-\gamma(p),\beta(p))=1.$$
If $p\in\Lambda-\Lambda'$, then $\alpha(p)=0, \gamma(p)=\beta(p)$ and therefore
\begin{align*}
&\mathcal{N}_{p}(k(p)^{l+l'}-\alpha(p),k(p)^{l+l'}-\gamma(p),\beta(p)) \\
=\ &\mathcal{N}_{p}(\beta_{1}(p)^{l+l'},\beta_{1}(p)^{l+l'}-\beta(p),\beta(p)) \\
=\ &\mathcal{N}_{p}(\beta_{1}(p)^{l+l'},\beta(p))
\end{align*}
which follows from Corollary \ref{cor:quotient}.

Finally we emphasize that the case $p=2$ can be proven similarly.
\end{proof}

\begin{rmk}
\rm The main result of \cite{typical2} (Theorem 13) is a direct consequence of Theorem \ref{thm:typical}.
We re-derive it as follows.

Suppose $A^{0}$ is cyclic, so that $F\cong\prod\limits_{p\in\Lambda}\mathbb{Z}_{p^{\beta_{1}(p)}}, B\cong\prod\limits_{p\in\Lambda'}\mathbb{Z}_{p^{\alpha_{1}(p)}}$,
and $\gamma(p)=k(p)$. Suppose the valence of $\textrm{Cay}(B,Y)$ is $d$.
Suppose the valence of $\textrm{Cay}(B,Y)$ is $d$.

If $d$ is odd, then $l'=1$, $\alpha_{1}(2)\geqslant 1$, and
$$\overline{R_{0}}=\{w_{1},\cdots,w_{l+1}\in\mathbb{Z}_{2^{k(2)}}^{l+1}\colon w_{1}=\cdots =w_{l}=0,2|w_{l+1}\}.$$
If $K\leqslant\overline{R(Y)}_{(2)}$ has type $k(2)^{l+l'}-\gamma(2)=k(2)^{l}$ and contains $\overline{R_{0}}$, then
$(0,\cdots,0,2)=2w$ for some $w=(w_{1},\cdots,w_{l+1})\in K$.
We have
$$\deg_{2}(w_{l+1})=0; \hspace{5mm} \deg_{2}(w_{i})\geqslant k(2)-1, \hspace{5mm} 1\leqslant i\leqslant l.$$
When $\beta_{1}(2)\geqslant 1$, it is impossible that $w\in\overline{R(Y)}_{(2)}$ and hence $N=0$. If $\beta_{1}(2)=0$, then it is clear that $N=1$.

If $d$ is even, then $l'=0$, and $N=\mathcal{N}_{2}(k(2)^{l+l'}-\alpha(2),k(2)^{l+l'}-\gamma(2),\beta(2))$.

For $p\in\Lambda\cap\Lambda'$,
\begin{align*}
&\mathcal{N}_{p}(k(p)^{l+l'}-\alpha(p),k(p)^{l+l'}-\gamma(p),\beta(p)) \nonumber\\
=\ &\mathcal{N}_{p}(k(p)^{l+l'}-\alpha_{1}(p),k(p)^{l+l'-1},\beta_{1}(p))
\nonumber \\
=\ &\mathcal{N}_{p}(k(p)^{l+l'}-\alpha_{1}(p),\beta_{1}(p),k(p)^{l+l'-1})  \nonumber \\
=\ &p^{\beta_{1}(p)(l+l'-1)}
\end{align*}
which follows from Proposition \ref{prop:quotient} (a) and (\ref{eq:number2}).

For $p\in\Lambda-\Lambda'$,
$$\mathcal{N}_{p}(k(p)^{l+l'},k(p))=p^{(\beta_{1}(p)-1)(l+l'-1)}(p^{l+l'}-1).$$

Summarizing, the number of $(\prod\limits_{p\in\Lambda}p^{\beta_{1}(p)})$-fold typical circulant coverings of $\textrm{Cay}(B,Y)$ is
$\left\{
    \begin{array}{ll}
     0 \hspace{5mm} \text{\ if\ }d \text{\ is\ odd\ and\ $\beta_{1}(2)\geqslant 1$\ }\\
     \prod\limits_{p\in\Lambda}N(p) \hspace{5mm} \text{\ otherwise},
    \end{array}
    \right.$ \\
where
\begin{align}N(p)=\left\{
    \begin{array}{ll}
     p^{\beta_{1}(p)(\lfloor\frac{d}{2}\rfloor-1)}, p\in\Lambda', \\
     p^{(\beta_{1}(p)-1)(\lfloor\frac{d}{2}\rfloor-1)}(p^{\lfloor\frac{d}{2}\rfloor}-1), p\notin\Lambda'.
    \end{array}
    \right.
    \end{align}
\end{rmk}

\begin{thm} \label{thm:typical1}
Suppose $F\cong\prod\limits_{p\in\Lambda}F_{(p)}$, where $\Lambda$ is a finite set of prime numbers and
$F_{(p)}\in\mathcal{A}_{p}(\beta(p))$ for all $p\in\Lambda$. Suppose 
\begin{align*}
\beta(p)=(\beta_{1}(p),\cdots,\beta_{m(p)}(p))=(\overline{\beta}_{1}(p)^{m_{1}(p)},\cdots,\overline{\beta}_{r(p)}(p)^{m_{r(p)}(p)}),
\end{align*}
such that $\overline{\beta}_{1}(p)>\cdots >\overline{\beta}_{r(p)}(p)$.
Then the number of typical coverings of $\emph{Cay}(B,Y)$ with covering transformation group isomorphic to $F$ is
\begin{align}
2^{H'(2)}\cdot\prod\limits_{i=1}^{m'(2)}(2^{l+1-i}-1)\cdot\prod\limits_{j=m'(2)+1}^{m(2)}(2^{l+l_{0}+1-j}-1)\cdot
\prod\limits_{t=1}^{r(2)}\prod\limits_{\nu=1}^{m_{t}(2)}(2^{\nu}-1)^{-1} \nonumber\\
\cdot\prod\limits_{2\neq p\in\Lambda}\left(p^{H(p)}\cdot\prod\limits_{i=1}^{m(p)}(p^{l+l'+1-i}-1)/(\prod\limits_{t=1}^{r(p)}\prod
\limits_{\nu=1}^{m_{t}(p)}(p^{\nu}-1))\right), \label{exp:typical}
\end{align}
where $l_{0}$ is the rank of $R'(Y)$, with
\begin{align}
R'(Y)=\{(u_{1},\cdots,u_{l'})\in\mathbb{Z}_{2}^{l'}\colon\sum\limits_{j=1}^{l'}u_{j}y'_{j}=0\},
\end{align}
$m'(2)$ is the largest $\eta$ such that $\beta_{\eta}(2)>1$,
\begin{align}
H'(2)=\sum\limits_{i=1}^{m'(2)}(l+1-2i)(\beta_{i}(2)-1)+\frac{1}{2}\left(\sum\limits_{i=1}^{r(2)}m_{i}(2)^{2}\right)-\frac{1}{2}m(2)^{2}+l_{0}m'(2),
\end{align}
and for $p\neq 2$,
\begin{align}
H(p)=\sum\limits_{i=1}^{m(p)}(l+l'+1-2i)(\beta_{i}(p)-1)+\frac{1}{2}\left(\sum\limits_{i=1}^{r(p)}m_{i}(p)^{2}\right)-\frac{1}{2}m(p)^{2}.
\end{align}
\end{thm}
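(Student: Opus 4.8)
The plan is to read this closed form off Theorem \ref{thm:typical} by summing over the intermediate group $A^{0}$, and then to evaluate the resulting subgroup numbers with the Delsarte--Djubjuk--Yeh formula \eqref{eq:number1}. By Theorem \ref{thm:classfication}, forgetting $A^{0}$, the coverings to be counted correspond bijectively to the subgroups $D$ with $\overline{R_{0}}\leqslant D\leqslant\overline{R(Y)}$ and $\overline{R(Y)}/D\cong F$. Writing $\overline{R(Y)}\cong\prod_{p}\overline{R(Y)}_{(p)}$ and $\overline{R_{0}}\cong\prod_{p}(\overline{R_{0}})_{(p)}$, this count is multiplicative over $p$, with local factor $N_{p}:=\#\{K\leqslant\overline{R(Y)}_{(p)}\colon(\overline{R_{0}})_{(p)}\leqslant K,\ \overline{R(Y)}_{(p)}/K\cong F_{(p)}\}$; a prime outside $\Lambda$ forces $F_{(p)}=0$, hence $K=\overline{R(Y)}_{(p)}$ and $N_{p}=1$, so only $p\in\Lambda$ survives.

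First I would dispose of the odd primes, leaning on the reduction already carried out in the proof of Theorem \ref{thm:typical}. For $2\neq p\in\Lambda$ I would sum the $p$-th factor of that theorem over all admissible types $\gamma(p)$ of $A^{0}_{(p)}$: by Proposition \ref{prop:quotient}(a) the summand $\mathcal{N}_{p}(k(p)^{l+l'}-\alpha(p),k(p)^{l+l'}-\gamma(p),\beta(p))$ equals $\mathcal{N}_{p}(k(p)^{l+l'}-\alpha(p),\beta(p),k(p)^{l+l'}-\gamma(p))$, and summing the latter over $\gamma(p)$ collapses, by Proposition \ref{prop:quotient}(b), to the single number $\mathcal{N}_{p}(k(p)^{l+l'}-\alpha(p),\beta(p))$. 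Since $k(p)^{l+l'}-\alpha(p)$ has least part $k(p)-\alpha_{1}(p)=\beta_{1}(p)$, Remark \ref{rmk} rewrites this as $\mathcal{N}_{p}(\beta_{1}(p)^{l+l'},\beta(p))$, a subgroup count in a homocyclic group of rank $l+l'$. Substituting $\alpha=\beta_{1}(p)^{l+l'}$, $\beta=\beta(p)$ into \eqref{eq:number1}, every index $\mu_{i}$ equals $l+l'+1$ (each part of $\alpha$ is $\geqslant\beta_{1}(p)\geqslant\beta_{i}(p)$), so the tail contribution $\sum_{i}\sum_{\eta=\mu_{i}}^{n}\alpha_{\eta}$ to the exponent vanishes and the whole expression collapses to the factor $p^{H(p)}\prod_{i=1}^{m(p)}(p^{l+l'+1-i}-1)/\prod_{t}\prod_{\nu}(p^{\nu}-1)$ in \eqref{exp:typical}. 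This is a routine substitution once the reduction to $\mathcal{N}_{p}(\beta_{1}(p)^{l+l'},\beta(p))$ is in hand.

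The substance of the proof is the prime $p=2$, where $(\overline{R_{0}})_{(2)}$ is a proper, nontrivial subgroup and the order-$2$ generators $y'_{j}$ inject both $2$-torsion into $B_{(2)}$ and relations into $\overline{R(Y)}_{(2)}$. Here I would make $N=N_{2}$ explicit by determining the isomorphism type of $G:=\overline{R(Y)}_{(2)}/(\overline{R_{0}})_{(2)}$, or at least the features of it seen by the Delsarte count. I expect $G$ to have exactly $l$ parts that are $\geqslant\beta_{1}(2)$ (coming from $y_{1},\dots,y_{l}$ together with the structure of $B_{(2)}$) and exactly $l_{0}$ parts equal to $1$ (the surviving $\mathbb{F}_{2}$-relations, i.e. $R'(Y)$ of rank $l_{0}$), so that, by the insensitivity of \eqref{eq:number1} to the precise sizes of the large parts (the Remark \ref{rmk} mechanism), $N_{2}=\mathcal{N}_{2}((\beta_{1}(2)^{l},1^{l_{0}}),\beta(2))$. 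Feeding this type into \eqref{eq:number1} and splitting $\beta(2)$ at $m'(2)$, the last index with $\beta_{i}(2)>1$, a part $\beta_{i}(2)>1$ cannot be matched by the order-$2$ block and so has $\mu_{i}=l+1$, giving the factors $2^{l+1-i}-1$, whereas a part equal to $1$ can use the extra $l_{0}$ directions and has $\mu_{i}=l+l_{0}+1$, giving the factors $2^{l+l_{0}+1-j}-1$; the tail sums over the $l_{0}$ unit parts, summed over $i\leqslant m'(2)$, contribute exactly the correction $l_{0}m'(2)$ to $H'(2)$.

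The hard part will be this precise determination of $G$ at $p=2$: unlike the odd primes, the involutions $y'_{j}$ need not be redundant, since they may be needed to generate $B_{(2)}$, so isolating exactly $l$ large parts and $l_{0}$ order-$2$ parts requires a careful analysis of how $R(Y)$, $R_{0}$, and the $2$-torsion relations $R'(Y)$ sit inside $\mathbb{Z}_{2^{k(2)}}^{l+l'}$. Verifying that the large parts are all $\geqslant\beta_{1}(2)$, that precisely $l_{0}$ unit parts survive modulo $(\overline{R_{0}})_{(2)}$, and that the two regimes of $\beta(2)$ then assemble into the single closed form with the shifts $l+1$, $l+l_{0}+1$ and the exponent $H'(2)$, is where essentially all the work lies; the odd-prime evaluation and the reduction to subgroup numbers are, by comparison, bookkeeping built on Theorem \ref{thm:typical}, Proposition \ref{prop:quotient}, Remark \ref{rmk}, and formula \eqref{eq:number1}.
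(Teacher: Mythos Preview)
Your proposal is correct and follows essentially the same route as the paper: sum Theorem~\ref{thm:typical} over $\gamma$, collapse via Proposition~\ref{prop:quotient}(b) and Remark~\ref{rmk} to $\mathcal{N}_{p}(\beta_{1}(p)^{l+l'},\beta(p))$ for odd $p$ and to $\mathcal{N}_{2}((\beta_{1}(2)^{l},1^{l_{0}}),\beta(2))$ for $p=2$, then plug into~\eqref{eq:number1}. The $p=2$ step is actually easier than you anticipate: the paper never determines the type $\tau$ of $G=\overline{R(Y)}_{(2)}/\overline{R_{0}}$, but simply notes that any element $w$ of a type-$\beta(2)$ subgroup of $\mathbb{Z}_{2^{k(2)}}^{l}\times\mathbb{Z}_{2}^{l'}$ has $\deg_{2}(w_{i})\geqslant k(2)-\beta_{1}(2)=\alpha_{1}(2)$ for $i\leqslant l$, so $(w_{1},\dots,w_{l},0,\dots,0)$ is automatically a relation and membership of $w$ in $G$ reduces to $(w_{l+1},\dots,w_{l+l'})\in R'(Y)$; hence $\mathcal{N}_{2}(\tau,\beta(2))=\mathcal{N}_{2}((k(2)^{l},1^{l_{0}}),\beta(2))$ without any structural analysis of $G$ itself.
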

\begin{proof}
For $p\neq 2$, Theorem \ref{thm:typical} implies that it suffices to compute
$$\sum\limits_{\gamma(p)}\mathcal{N}_{p}(k(p)^{l+l'}-\alpha(p),k(p)^{l+l'}-\gamma(p),\beta(p))
=\mathcal{N}_{p}(k(p)^{l+l'}-\alpha(p),\beta(p)),$$
which is equal to $\mathcal{N}_{p}(\beta_{1}(p)^{l+l'},\beta(p))$ by Remark \ref{rmk}.
Then apply (\ref{eq:number1}).

For $p=2$, let $N(\gamma(2))$ be the number of the subgroups $K$ of type $k(2)^{l+l'}-\gamma(2)$ of
$\overline{R(Y)}_{(2)}$ such that $\overline{R_{0}}\leqslant K$ and $\overline{R(Y)}_{(2)}/K\in\mathcal{A}_{2}(\beta(2))$.
By Proposition \ref{prop:quotient} (b), we have
$$\mathcal{N}_{2}(\tau,\beta(2))=\sum\limits_{\gamma(2)}N(\gamma(2)),$$
where $\tau$ is the type of $\overline{R(Y)}_{(2)}/\overline{R_{0}}$.

Note that
\begin{align}
(\mathbb{Z}^{l+l'}/R)_{(2)}\cong\mathbb{Z}_{2^{k(2)}}^{l+l'},
\end{align}
where $2^{k(2)}=\exp(B_{(2)})\cdot\exp(F_{(2)})$.
Via this isomorphism $\overline{R_{0}}=(\overline{R_{0}})_{(2)}$ is identified with
\begin{align}
\{(w_{1},\cdots,w_{l+l'})\in\mathbb{Z}_{2^{k(2)}}^{l+l'}\colon w_{1}=\cdots =w_{l'}=0; 2|w_{l+j}, 1\leqslant j\leqslant l'\}
\end{align}
which is isomorphic to $\mathbb{Z}_{2^{k(2)-1}}^{l'}$.

Now if $K'\leqslant\mathbb{Z}_{2^{k(2)}}^{l+l'}/\overline{R_{0}}\cong\mathbb{Z}_{2^{k(2)}}^{l}\times\mathbb{Z}_{2}^{l'}$
has type $\beta(2)$, and
$w=(w_{1},\cdots,w_{l+l'})\\ \in K'$, with $w_{1},\cdots,w_{l}\in\mathbb{Z}_{2^{k(2)}}$ and $w_{l+1},\cdots,w_{l+l'}\in\mathbb{Z}_{2}$,
then
$$\deg_{2}(w_{i})\geqslant\alpha_{1}(2) \text{\ for\ all\ }w_{i}, 1\leqslant i\leqslant l,$$
so $(w_{1},\cdots,w_{l},0,\cdots,0)\in\overline{R(Y)}_{(2)}/\overline{R_{0}}$. Hence $w\in\overline{R(Y)}_{(2)}/\overline{R_{0}}$
if and only if $(w_{l+1},\cdots,w_{l+l'})\in R'(Y)$. This shows
$$\mathcal{N}_{2}(\tau,\beta(2))=\mathcal{N}_{2}((k(2)^{l},1^{l_{0}}),\beta(2)),$$
which is equal to $\mathcal{N}_{2}((\beta_{1}(2)^{l},1^{l_{0}}),\beta(2))$, because a subgroup $K$ of type $\beta(2)$ of $\mathbb{Z}_{2^{k(2)}}^{l}\times\mathbb{Z}_{2}^{l_{0}}$ must be contained in
$$\{(w_{1},\cdots,w_{l+l_{0}})\colon 2^{\beta_{1}(2)}w_{i}=0, 1\leqslant i\leqslant l, 2w_{j}=0 \text{\ for\ all\ }j>l\}\cong\mathbb{Z}_{2^{\beta_{1}(2)}}^{l}\times\mathbb{Z}_{2}^{l_{0}}.$$
Then apply (\ref{eq:number1}).
\end{proof}

\begin{rmk}
\rm Consider the special case when $F$ is cyclic: $F\cong\mathbb{Z}_{f}$ with
$f=\prod\limits_{p\in\Lambda}p^{\beta_{1}(p)}$. Then (\ref{exp:typical}) reduces to
\begin{align}
N'(2)\cdot\prod\limits_{2\neq p\in\Lambda}\left(p^{(l+l'-1)(\beta_{1}(p)-1)}\cdot\frac{p^{l+l'}-1}{p-1}\right),
\end{align}
where $N'(2)=1$ if $2\notin\Lambda$ and
$$N'(2)=\left\{
    \begin{array}{ll}
     2^{(l-1)(\beta_{1}(2)-1)+l_{0}}(2^{l}-1), \hspace{5mm} \beta_{1}(2)>1 \\
     2^{l+l_{0}}-1, \hspace{5mm} \beta_{1}(2)=1
    \end{array}
    \right.$$
if $2\in\Lambda$.
If $f$ is prime and $B$ is cyclic, then $l'\in\{0,1\}$ and the result specializes to Theorem 1 of \cite{typical}.
\end{rmk}

\begin{thm}  \label{thm:typical2}
Suppose $q=\prod\limits_{p\in\Lambda}p^{s(p)}$. The number of $q$-fold typical abelian coverings of $\emph{Cay}(B,Y)$ is
\begin{align}
\left(\sum\limits_{k=\lfloor\frac{s(2)-l_{0}}{l}\rfloor}^{s(2)}\sum\limits_{0\leqslant b_{k}\leqslant\cdots\leqslant b_{1}\leqslant l+l_{0} \atop b_{1}+\cdots +b_{k}=s(2)}
\genfrac{[}{]}{0pt}{}{l+l_{0}-b_{2} }{b_{1}-b_{2}}_{2}2^{(l+l_{0}-b_{1})b_{2}}\cdot\prod\limits_{i=2}^{k}\genfrac{[}{]}{0pt}{}{l-b_{i+1} }{ b_{i}-b_{i+1}}_{2}
2^{(l-b_{i})b_{i+1}}\right) \nonumber \\
\cdot\prod\limits_{2\neq p\in\Lambda}\left(\sum\limits_{k=\lfloor\frac{s(p)}{l+l'}\rfloor}^{s(p)}\sum\limits_{0\leqslant b_{k}\leqslant\cdots\leqslant b_{1}\leqslant l+l' \atop b_{1}+\cdots +b_{k}=s(p)}\prod\limits_{i=1}^{k}\genfrac{[}{]}{0pt}{}{l+l'-b_{i+1}}{b_{i}-b_{i+1}}_{p}
p^{(l+l'-b_{i})b_{i+1}}\right). \label{exp:typical2}
\end{align}
\end{thm}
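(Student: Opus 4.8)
The plan is to reduce the count to an application of Theorem \ref{thm:typical1} summed over all possible covering transformation groups, and then to recognize the resulting sums as numbers of subgroups of prescribed order, which are furnished by Stehling's formula (\ref{exp:total}).

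First I would observe that a $q$-fold typical abelian covering of $\text{Cay}(B,Y)$ is precisely one whose covering transformation group $F$ is an abelian group of order $q$. Hence the number of $q$-fold coverings is obtained by summing, over all isomorphism classes of abelian groups $F$ with $|F|=q$, the quantity counted in Theorem \ref{thm:typical1}. Writing $F\cong\prod_{p\in\Lambda}F_{(p)}$ with $F_{(p)}\in\mathcal{A}_p(\beta(p))$, the choice of $F$ of order $q$ amounts to choosing, independently for each $p\in\Lambda$, a partition $\beta(p)$ of $s(p)$. Since the formula of Theorem \ref{thm:typical1} is a product of local factors indexed by the primes $p\in\Lambda$, the distributive law turns the sum over $F$ into a product over $p\in\Lambda$ of sums over partitions $\beta(p)\vdash s(p)$. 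Thus it suffices to evaluate, for each prime $p$, the $p$-local sum.

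Next I would handle the odd primes. For $p\neq 2$, Theorem \ref{thm:typical1} (via Theorem \ref{thm:typical} and Remark \ref{rmk}) shows that the $p$-local contribution for a fixed $\beta(p)$ is $\mathcal{N}_p(\beta_1(p)^{l+l'},\beta(p))$. By Remark \ref{rmk} this equals $\mathcal{N}_p(K^{l+l'},\beta(p))$ for any $K\geq s(p)$, so summing over all $\beta\vdash s(p)$ gives the number of subgroups of order $p^{s(p)}$ in $\mathbb{Z}_{p^K}^{l+l'}$. Applying (\ref{exp:total}) with $\alpha=K^{l+l'}$, where $a_i=l+l'$ for all relevant $i$, and discarding the trivial factors with index exceeding $\beta_1=k$, yields exactly the odd-$p$ factor in (\ref{exp:typical2}); the lower summation bound $k\geq\lceil s(p)/(l+l')\rceil$ comes from the constraint that a partition of $s(p)$ into at most $l+l'$ parts has largest part at least $s(p)/(l+l')$.

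Finally, for $p=2$ the argument is the same except that the ambient group is not $\mathbb{Z}_{2^K}^{l+l'}$. As in the proof of Theorem \ref{thm:typical1}, the relevant subgroups of $\overline{R(Y)}_{(2)}/\overline{R_0}$ containing $\overline{R_0}$ are identified with subgroups of $\mathbb{Z}_{2^{k(2)}}^l\times\mathbb{Z}_2^{l_0}$, so the fixed-$\beta$ contribution is $\mathcal{N}_2((\beta_1(2)^l,1^{l_0}),\beta(2))$, and summing over $\beta\vdash s(2)$ gives the number of subgroups of order $2^{s(2)}$ in $\mathbb{Z}_{2^K}^l\times\mathbb{Z}_2^{l_0}$. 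Applying (\ref{exp:total}) with $\alpha=(K^l,1^{l_0})$, so that $a_1=l+l_0$ and $a_i=l$ for $i\geq 2$, produces the distinguished first factor $\genfrac{[}{]}{0pt}{}{l+l_0-b_2}{b_1-b_2}_2 2^{(l+l_0-b_1)b_2}$ together with the remaining product, and the bound $k\geq\lceil(s(2)-l_0)/l\rceil$ arises from $s(2)\leq b_1+(k-1)l\leq(l+l_0)+(k-1)l$. The main obstacle I anticipate is this bookkeeping for $p=2$: carefully matching Stehling's conjugate-partition variables $a_i,b_i$ to the split exponent structure $(K^l,1^{l_0})$, correctly truncating the product at $i=k=\beta_1$, and verifying that the lower limit of the outer sum over $k$ is governed by $l_0$ exactly as stated.
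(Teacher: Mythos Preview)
Your proposal is correct and follows essentially the same route as the paper: invoke Theorem~\ref{thm:typical1} (and its proof) to express the count for a fixed $F$ as $\mathcal{N}_{2}((\beta_{1}(2)^{l},1^{l_{0}}),\beta(2))\cdot\prod_{p\neq 2}\mathcal{N}_{p}(\beta_{1}(p)^{l+l'},\beta(p))$, sum over all $\beta(p)\vdash s(p)$ independently at each prime, identify the resulting local sums with the $\mathcal{N}_{p,s(p)}$ counts, and then apply Stehling's formula~(\ref{exp:total}). The only cosmetic difference is that the paper writes the odd-$p$ factor as $\sum_{\beta_{1}(p)}\mathcal{N}_{p,s(p)}(\beta_{1}(p)^{l+l'})$ (a sum indexed by the value of $\beta_{1}(p)$), while you pass via Remark~\ref{rmk} to a single ambient group $\mathbb{Z}_{p^{K}}^{l+l'}$ and then reinterpret the outer $k$-sum as grouping Stehling's summands by $\beta_{1}=k$; both organizations lead to the displayed expression~(\ref{exp:typical2}).
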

\begin{proof}
By Theorem \ref{thm:typical1} and its proof, the number of typical coverings of $\textrm{Cay}(B,Y)$ with covering transformation group isomorphic to $\prod\limits_{p\in\Lambda}F_{(p)}$ with $F_{(p)}\in\mathcal{A}_{p}(\beta(p))$ is
$$\mathcal{N}_{2}((\beta_{1}(2)^{l},1^{l_{0}}),\beta(2))\cdot\prod\limits_{2\neq p\in\Lambda}\mathcal{N}_{p}(\beta_{1}(p)^{l+l'},\beta(p)).$$
So the number of $q$-fold typical coverings is
\begin{align*}
\left(\sum\limits_{\beta(2)\leqslant(s(2)^{l},1^{l_{0}}) \atop  |\beta(2)|=s(2)}\mathcal{N}_{2}((\beta_{1}(2)^{l},1^{l_{0}},\beta(2))\right)\cdot\prod\limits_{2\neq p\in\Lambda}\left(\sum\limits_{\beta(p)\leqslant s(p)^{l+l'} \atop |\beta(p)|=s(p)}\mathcal{N}_{p}(\beta_{1}(p)^{l+l'},\beta(p))\right) \\
=\left(\sum\limits_{\beta_{1}(2)=\lfloor\frac{s(2)-l_{0}}{l}\rfloor}^{s(2)}\mathcal{N}_{2,s(2)}((\beta_{1}(2)^{l},1^{l_{0}}))\right)
\cdot\prod\limits_{2\neq p\in\Lambda}\left(\sum\limits_{\beta_{1}(p)=\lfloor\frac{s(p)}{l+l'}\rfloor}^{s(p)}\mathcal{N}_{p,s(p)}(\beta_{1}(p)^{l})\right).
\end{align*}
The result can be proven by applying formula (\ref{exp:total}).
\end{proof}


\begin{thebibliography}{}

\bibitem{unimodality}
L. M. Butler,
A unimodality result in the enumeration of subgroups of a finite abelian group,
Proceedings of the American Mathematical Society, 101 (4) (1987) 771-775.

\bibitem{total}
G. C\u{a}lug\u{a}reanu,
The total number of subgroups of a finite abelian group,
Scientiae Mathematicae Japonicae, 60 (1) (2004) 157-167.

\bibitem{number3}
I. J. Davies,
Enumeration of certain subgroups of abelian $p$-groups,
Proceedings of the Edinburgh Mathematical Society. Series II, 13 (1962) 1-4.

\bibitem{number4}
S. Delsarte,
Fonctions de M\"{o}bius sur les groupes abeliens finis,
Annals of Math., 49 (1948) 600-609.

\bibitem{number5}
P. E. Djubjuk,
On the number of subgroups of a finite abelian group,
Izv. Akad. Nauk SSSR Ser. Mat. 12 (1948) 351-378.

\bibitem{typical3}
R. Q. Feng, J. H. Kwak,
Typical circulant double coverings of a circulant graph,
Discrete Math. 277 (2004) 73-85.


\bibitem{typical2}
R. Q. Feng, J. H. Kwak, Y. S. Kwon,
Enumerating typical abelian circulant projections onto a circulant graph,
SIAM J. Discrete Math. 19 (2005) 196-207; SIAM J. Discrete Math. 21 (2007) 548-550 (erratum).

\bibitem{typical}
R. Q. Feng, J. H. Kwak, Y. S. Kwon,
Enumerating typical abelian prime-fold coverings of a circulant graph,
Discrete Math. 309 (2009) 2174-2179.

\bibitem{Graph}
J. L. Gross, T. W. Tucker,
Topological Graph Theory,
Wiley-Interscience, New York, 1987.

\bibitem{number2}
T. Stehling,
On computing the number of subgroups of a finite abelian group,
Combinatorica 12(4) (1992) 475-479.

\bibitem{number}
Y. Yeh,
On prime power abelian groups,
Bull. Amer. Math. Soc. 54 (1948) 323-327.


\end{thebibliography}
\end{document}